\documentclass[preprint,12pt]{elsarticle}
\usepackage{amsthm,amsmath,amssymb}
\usepackage[colorlinks=true,citecolor=black,linkcolor=black,urlcolor=blue]{hyperref}
\usepackage{graphicx}
\usepackage{float}
\usepackage{mathrsfs}
\usepackage{mathtools}
\usepackage{algorithm}
\usepackage{algpseudocode}
\usepackage{comment}
\usepackage{epstopdf}
\usepackage{epsfig}
\usepackage{subfigure}

\newtheorem*{claim*}{Claim}
\usepackage{tikz}
\usetikzlibrary{positioning}
\usepackage{float}
\usetikzlibrary{shapes.geometric} 
\usepackage{caption}

\theoremstyle{plain}
\newtheorem{theorem}{Theorem}
\newtheorem{lemma}[theorem]{Lemma}
\newtheorem{corollary}[theorem]{Corollary}
\newtheorem{proposition}[theorem]{Proposition}

\newtheorem{observation}[theorem]{Observation}

\theoremstyle{definition}
\newtheorem{definition}[theorem]{Definition}

\newtheorem{problem}[theorem]{Problem}

\theoremstyle{remark}

\title{The polynomial characterization of tope graphs of the lopsided sets}

\author{Xuan Zheng, Yan-Ting Xie,
Shou-Jun Xu$^*$\\
\small School of Mathematics and Statistics, Gansu Center for Applied Mathematics, Lanzhou University, Lanzhou, Gansu 730000, China\\}

\begin{document}
\begin{abstract}
The cube polynomial $C_G(x)$ generates the number of $k$-cubes on a graph $G$. As a subclass of partial cubes, the tope graphs of lopsided sets (LOPs) generalize daisy cubes and median graphs. 
In this paper, we prove that every tope graph of a LOP shares its cube polynomial with some daisy cube, thereby answering affirmatively a problem posed earlier by the authors. 
Furthermore, we present explicit expressions for the cube polynomials of tope graphs of LOPs: $C_G(x)=f_\mathcal{K}(x+1)$, where $f_\mathcal{K}(x)$ is the $f$-polynomial of the cubical complex $\mathcal{K}$ of $G$.
Finally, we show that a class $\mathcal{G}$ of partial cubes is the class of tope graphs of LOPs if and only if the following equivalent conditions hold: (a) $\mathcal{G}$ is the maximal pc-minor-closed class such that every graph
in $\mathcal{G}$ has the same cube polynomial as some daisy cube; (b) for every
$G\in\mathcal{G}$, each antipodal subgraph of $G$ has the same cube polynomial as some
daisy cube.

\end{abstract}
\begin{keyword} Tope graph of lopsided set\sep Cube polynomial\sep Daisy cube\sep Partial cube\sep $f$-polynomial

\end{keyword}
\maketitle
\noindent
$^*$Corresponding author, e-mail: \texttt{shjxu@lzu.edu.cn}

\section{Introduction}
A graph polynomial is a graph invariant with values in a polynomial ring, usually a subring of real polynomial ring $\mathbb{R}[x]$. Graph polynomials provide important structural information of graphs and
have been extensively studied as graph invariants. However, since non-isomorphic graphs may share identical polynomial invariants, a graph polynomial generally does not uniquely characterize a graph, such as the characteristic polynomial \cite{S73}, the chromatic polynomial \cite{DK05}, and the Tutte polynomial \cite{EC08}, Therefore,
an important research problem is to investigate polynomially equivalent
graphs, i.e., the class of non-isomorphic graphs that have the same certain graph polynomials. More precisely, for two graphs $G,G'$ and a graph polynomial $P$, if $P_G(x)=P_{G'}(x)$, we say that $G$ and $G'$ are {\em $P$-equivalent}.

In the present paper, we mainly focus on cube polynomials, which are defined as follows:  
\begin{definition}[\cite{BSKR1}]\label{def01}
For a graph $G$, let $c_k(G)$ ($k\geq 0$) be the number of induced subgraphs of $G$ isomorphic to $Q_k$ in $G$. The \textit{cube polynomial} of $G$, $C_G(x)$, is defined as
   \[C_G(x):=\sum_{k\ge0}c_k(G)x^k.\]
\end{definition}

For two graphs $G$ and $G'$, we say that they are {\em $C$-equivalent} if their cube polynomials coincide, that is, if $C_G(x)=C_{G'}(x)$. The cube polynomial was introduced by Bre\v{s}ar, Klav\v{z}ar, and \v{S}krekovski \cite{BSKR1} and has since attracted considerable attention; see, for example, \cite{BSKR2,xfx24}. Most existing studies on cube polynomials concern partial cubes, with particular emphasis on median graphs.

The \textit{$n$-dimensional hypercube} (or \textit{$n$-cube}) $Q_n$ is the graph with vertex set
$B^n \,:=\, \big\{ u_1u_2\cdots u_n \mid u_i\in\{0,1\},\ 1\le i\le n \big\}.$
Two vertices are adjacent if and only if the corresponding binary strings differ in precisely one position.
A {\em partial cube} is a graph isomorphic to an isometric subgraph of a hypercube $Q_n$. A median graph is a special partial cube that for every its three vertices, there exists a unique vertex that lies on the shortest paths between each pair of the three vertices simultaneously. Let $X$ be a subset of $V(Q_n)$ and $0^n$ be the all-zero vertex of $Q_n$. The \textit{daisy cube} $Q_n(X)$ generated by $X$ is the induced subgraph of $Q_n$ spanned by all vertices lying on some shortest path from $0^n$ to a vertex in $X$. The simplex graph $S(G)$ of a graph $G$ is defined as the graph whose vertices are the cliques of $G$ (including the empty set), with two vertices being adjacent if, as cliques of $G$, they differ in exactly one vertex. Xie and Xu \cite{xx25} proved that the class of simplex graphs is exactly the intersection of the class of median graphs and the class of daisy cubes. 
About the cube polynomials of median graphs and simplex graphs, the authors proved the following result.

\begin{lemma}[\cite{zxx26}]\label{S=M}
For any median graph $G$, there exists a simplex graph $G'$ such that $G$ is $C$-equivalent to $G'$.
\end{lemma}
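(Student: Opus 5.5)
We will realise the cube polynomial of a median graph $G$ as that of a flag simplicial complex, and then take the simplex graph of a suitable graph. We use two standard facts about median graphs. First, the cubes of $G$ form a cubical complex, and the map sending a vertex to itself and a cube to its set of $\Theta$-classes behaves well. Second, and more to the point, we use the classical Euler-type identity (Soltan--Chepoi, Škrekovski) together with the structure of $\Theta$-classes.

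The cleanest route is by induction on the number of $\Theta$-classes, via the expansion procedure. By Mulder's theorem, every median graph arises from $K_1$ by a sequence of convex expansions. Let $G$ be obtained from a median graph $H$ by a convex expansion along convex subgraphs $H_1,H_2$, with $H_0=H_1\cap H_2$. Every cube of $G$ is of one of two kinds. Either it is a cube of $G$ avoiding the new $\Theta$-class $E$, and these are counted by $C_H(x)+C_{H_0}(x)$, since cubes inside $H_0$ are duplicated. Or it is a cube crossing $E$, and these correspond bijectively to cubes of $H_0$ with one extra dimension. This yields the known recursion
\[
C_G(x)=C_H(x)+(1+x)\,C_{H_0}(x),
\]
where $H_0$ is again a median graph, being convex in $H$.

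Now we induct, proving the stronger statement that for every median graph $G$ there is a graph $\Gamma$ with $C_G(x)=C_{S(\Gamma)}(x)$. Recall that $C_{S(\Gamma)}(x)=\sum_{K} x^{|K|}\,2^{\,0}$ is not quite right as written; the correct count is that the cubes of $S(\Gamma)$ are the pairs $(A,B)$ with $A\subseteq B$ and $B$ a clique, the cube having dimension $|B\setminus A|$. Hence
\[
C_{S(\Gamma)}(x)=\sum_{B\text{ clique}}(1+x)^{|B|}=\mathrm{clq}_\Gamma(1+x),
\]
where $\mathrm{clq}_\Gamma(y)=\sum_{B}y^{|B|}$ is the clique polynomial, with the empty clique included. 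The recursion therefore says that we need graphs $\Gamma,\Gamma_0$ with $\mathrm{clq}_{\Gamma_H}+y\,\mathrm{clq}_{\Gamma_{H_0}}$ equal to the clique polynomial of some graph. Adding a new vertex $v$ does this: take the disjoint union of $\Gamma_H$ and $\Gamma_{H_0}$ and join $v$ to all of $\Gamma_{H_0}$. The cliques of this new graph are the cliques of $\Gamma_H$, plus the nonempty cliques of $\Gamma_{H_0}$ (which do not contain $v$), plus $v$ together with any clique of $\Gamma_{H_0}$. This overcounts by $\mathrm{clq}_{\Gamma_{H_0}}-1$. To avoid the overcount, we instead keep $\Gamma_{H_0}$ only inside the neighbourhood of $v$.

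This is the main obstacle: we need $\Gamma_{H_0}$ to sit inside $\Gamma_H$ as an induced subgraph, so that we can add $v$ adjacent exactly to that copy. We therefore strengthen the induction so that it is compatible with convex subgraphs. We claim that one can choose the graphs $\Gamma_G$ so that for every convex subgraph $K$ of $G$ there is an induced subgraph $\Gamma_K$ of $\Gamma_G$ realising $C_K$. Here $\Gamma_K$ is a subgraph obtained by restricting the vertex set, which is compatible with the expansion. Concretely, we set $\Gamma_G$ to have one vertex per step of the expansion sequence, with adjacency given by crossing of $\Theta$-classes. This is close to the crossing graph construction, and the induction then verifies that convex subgraphs correspond to vertex subsets.

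If this bookkeeping becomes unwieldy, the fallback is a direct argument. We would show that the $f$-vector of the cube complex of a median graph, rewritten in the basis $(1+x)^k$, has nonnegative coefficients $a_k$ with $a_0=1$ that form the clique vector of a flag complex. The expansion recursion gives nonnegativity immediately, and a Kruskal--Katona/Frankl–Füredi–Kalai type realisability criterion for flag complexes would supply $\Gamma$.
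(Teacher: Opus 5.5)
The paper gives no proof of this lemma; it is quoted from the authors' earlier work, so your argument has to stand on its own. Its skeleton is right. The expansion recursion $C_G=C_H+(1+x)C_{H_0}$ is correct (it tacitly uses that every cube of $H$ lies in $H_1$ or in $H_2$). The identity $C_{S(\Gamma)}(x)=\mathrm{clq}_\Gamma(1+x)$ is correct. Taking $\Gamma_G=G^\#$, the crossing graph, is the right choice, so what you are really claiming is $C_G(x)=\mathrm{clq}_{G^\#}(1+x)$, i.e.\ $G'=S(G^\#)$.

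\textbf{The gap.} The sentence ``the induction then verifies that convex subgraphs correspond to vertex subsets'' carries the whole proof, and it is not bookkeeping. To get $\mathrm{clq}_{G^\#}(y)=\mathrm{clq}_{H^\#}(y)+y\,\mathrm{clq}_{H_0^\#}(y)$ you must show three things:
(1) the expansion does not change which old $\Theta$-classes cross;
(2) the new class $E$ crosses an old class $F$ exactly when $F$ has an edge in $H_0$;
(3) if two classes $F,F'$ cross in $H$ and both have edges in the convex subgraph $H_0$, they already cross inside $H_0$, i.e.\ $H^\#[\Theta(H_0)]=H_0^\#$.

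Fact (3) is false for general partial cubes. In $C_6$, a convex path of length two contains edges of two classes that cross in $C_6$ but not in the path. So (3) must come from the median structure. It follows from the Helly property of convex sets in median graphs: $H_0$, a halfspace of $F$ and a halfspace of $F'$ pairwise intersect, hence share a vertex, so all four quadrants meet $H_0$. The ``only if'' half of (2) is the same argument applied to $H_1$, $H_2$ and a halfspace of $F$.

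Without (3), joining the new vertex to $\Theta(H_0)$ counts the cliques of $H^\#[\Theta(H_0)]$, not those of $H_0^\#$, and the recursion does not close. Also, your strengthened hypothesis as literally stated (for each convex $K$ there is \emph{some} induced subgraph realising $C_K$) does not propagate through the induction. You need coherent choices, and that is exactly what $\Gamma_K=G^\#[\Theta(K)]$ together with the Helly property supplies.

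\textbf{The fallback.} This is not a proof. There is no known Kruskal--Katona-type characterisation of clique vectors of graphs; Frankl--F\"uredi--Kalai concerns balanced complexes and gives only necessary conditions here. Nonnegativity with $a_0=1$ is far from sufficient; for instance $(1,1,1)$ is not a clique vector.

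\textbf{A route without induction.} Fix a base vertex $v$. Each cube has a unique vertex farthest from $v$, and the cubes with top vertex $u$ are exactly those spanned by subsets of the down-edges at $u$. Hence $C_G(x)=\sum_{u}(1+x)^{\deg^-(u)}$. The map sending $u$ to the set of $\Theta$-classes of its down-edges is a bijection onto the cliques of $G^\#$, again by the Helly property.
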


In this paper, we generalize the median graphs in Lemma \ref{S=M} to the context of the tope graphs of lopsided sets. The lopsided sets were introduced by Lawrence \cite{l83} in 1983. In 1995, the concept of lopsided sets was rediscovered in the context of extremal combinatorics and named \textit{ample sets} \cite{bcdk06}. 
 Lopsided sets provided an important class of the sample compression conjecture \cite{AD97}, which is one of most important open problem in computational learning theory.
The tope graphs of lopsided sets are known to be partial cubes and the important superclass of median graphs and daisy cubes \cite{m18}. We denote the class of tope graphs of lopsided sets by $\mathcal{G}_{\mathrm{LOP}}$. 
Recently, Knauer and Marc \cite{KKTM} established purely graph-theoretical characterizations of lopsided tope graphs via forbidden partial cube minors and antipodal subgraphs.
We consider the following problem proposed in our previous work \cite{zxx26}:

\begin{problem}[\cite{zxx26}]\label{pro02}
For any tope graph $G$ of a lopsided set, does there exist a daisy cube $G'$ satisfying
$$C_G(x)=C_{G'}(x)?$$
\end{problem}

We provide the following affirmative answer.
\begin{theorem}\label{L=D}
    For any tope graph $G$ of a lopsided set, there exists a daisy cube $G'$ satisfying
$$C_G(x)=C_{G'}(x).$$
\end{theorem}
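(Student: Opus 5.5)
The plan is to take the daisy cube generated by the simplicial complex of strongly shattered sets of the lopsided set, and compare the two cube counts direction by direction. We write $G$ as the subgraph of $Q_n$ induced by a lopsided (ample) set $L\subseteq\{0,1\}^n$. Vertices are identified with subsets of $[n]$.

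\textbf{Setup and choice of $G'$.} Recall that $L$ \emph{strongly shatters} $A\subseteq[n]$ if some subcube of $Q_n$ with free coordinates $A$ lies entirely in $L$. Let $\underline{\mathcal{X}}(L)$ be the family of strongly shattered sets. This family is closed under taking subsets, and $L$ is lopsided exactly when $|L|=|\underline{\mathcal{X}}(L)|$. As $G'$ we take the subgraph of $Q_n$ induced by the indicator vectors of the members of $\underline{\mathcal{X}}(L)$. Since the family is a down-set, this is the daisy cube $Q_n(X)$ with $X$ the set of maximal members of $\underline{\mathcal{X}}(L)$.

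\textbf{Counting cubes.} Both $G$ and $G'$ are isometric subgraphs of $Q_n$. In an isometric subgraph of $Q_n$, every induced $Q_k$ uses $k$ distinct $\Theta$-classes, i.e.\ $k$ distinct coordinates, and is therefore a genuine subcube of $Q_n$. Hence $c_k(G)$ counts pairs $(A,y)$ with $|A|=k$ and $y\in\{0,1\}^{[n]\setminus A}$ such that every extension of $y$ to $[n]$ lies in $L$. For each fixed $A$, the admissible $y$ form the set
\[L^A=\{y\in\{0,1\}^{[n]\setminus A} : y\times\{0,1\}^A\subseteq L\}.\]
Thus $c_k(G)=\sum_{|A|=k}|L^A|$.

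In the daisy cube $G'$, a $k$-subcube with free coordinates $A$ is an interval $[S,S\cup A]$ with $S\cap A=\emptyset$ and $S\cup A\in\underline{\mathcal{X}}(L)$. Setting $T=S\cup A$, this gives
\[c_k(G')=\sum_{|A|=k}\#\{T\in\underline{\mathcal{X}}(L): T\supseteq A\}.\]
So it suffices to prove, for each $A\subseteq[n]$,
\[|L^A|=\#\{B\subseteq[n]\setminus A : A\cup B\in\underline{\mathcal{X}}(L)\}. \tag{$*$}\]

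\textbf{Key lemma and main obstacle.} The crux is the following statement. If $L$ is lopsided, then $L^A$ is lopsided on the ground set $[n]\setminus A$, and $B$ is strongly shattered by $L^A$ iff $A\cup B$ is strongly shattered by $L$. Given this, $(*)$ follows at once from the defining equality $|L^A|=|\underline{\mathcal{X}}(L^A)|$.

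The shattering correspondence is immediate from the definitions. A subcube of $L^A$ with free coordinates $B$, multiplied by $\{0,1\}^A$, is exactly a subcube of $L$ with free coordinates $A\cup B$, and conversely.

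The real work is showing that $L^A$ is again lopsided. It suffices to treat $|A|=1$ and iterate, since $(L^{A_1})^{A_2}=L^{A_1\cup A_2}$. For a single coordinate $e$, I would invoke the known closure of ample/lopsided sets under this ``reduction'' operation (Lawrence; Bandelt--Chepoi--Dress--Koolen). If a self-contained argument is wanted, I would use the standard characterization that $L$ is ample iff $\overline{\mathcal{X}}(L)=\underline{\mathcal{X}}(L)$, where $\overline{\mathcal{X}}(L)$ denotes the family of shattered sets. One then checks that a set $B$ shattered by $L^e$ forces $B\cup\{e\}$ to be shattered by $L$, hence strongly shattered by $L$, hence $B$ is strongly shattered by $L^e$. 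This verification is where I expect the main difficulty.

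Summing $(*)$ over all $A$ with $|A|=k$ gives $c_k(G)=c_k(G')$ for every $k$, and therefore $C_G(x)=C_{G'}(x)$.
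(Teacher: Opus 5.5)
Your proposal is correct and follows essentially the same route as the paper. Both use the daisy cube generated by the strongly shattered sets $\underline{\mathcal{X}}(\mathcal{L})$, the count $c_k(G)=\sum_{|A|=k}|\mathcal{L}^A|$, and the key identity $|\mathcal{L}^A|=|\underline{\mathcal{X}}(\mathcal{L}^A)|=\#\{S\in\underline{\mathcal{X}}(\mathcal{L}) : A\subseteq S\}$, which follows from lopsidedness of $\mathcal{L}^A$ and $(\mathcal{L}^A)^B=\mathcal{L}^{A\cup B}$. The only difference is in how lopsidedness of $\mathcal{L}^A$ is obtained: the paper uses $\mathcal{L}^A=\{-1,1\}^{E-A}-\pi_{E-A}(\{-1,1\}^E-\mathcal{L})$ together with closure under complements and projections, whereas your shattered-equals-strongly-shattered argument is already complete as sketched and works directly for any $A$.
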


From the proof of Theorem~\ref{L=D}, we derive an explicit formula for the cube polynomial of the tope graph of a lopsided set (see Section~3). To establish a polynomial characterization of the tope graphs of lopsided sets, recall the definition of the $f$-polynomial.

\begin{definition}[\cite{mw26}]\label{def01}

Let $\mathcal{K}$ be a $d$-dimensional simplicial complex. For $-1\leq i\leq d-1$, let
$f_i(\mathcal{K}):=\#\{\sigma\in\mathcal{K}\mid \dim(\sigma)=i\},$
where $f_{-1}(\mathcal{K})=1$. The vector
\[
(f_{-1}(\mathcal{K}),f_0(\mathcal{K}),\dots,f_{d-1}(\mathcal{K}))
\]
is called the \textit{$f$-vector} of $\mathcal{K}$. The associated \textit{$f$-polynomial} of $\mathcal{K}$ is defined by
\[
f_{\mathcal{K}}(x):=\sum_{i=0}^{d}f_{i-1}(\mathcal{K})x^i .
\]

\end{definition}

Daisy cubes can be viewed as graph-theoretic representations of simplicial complexes. Omitting the root, the distance polynomial of a daisy cube is an $f$-polynomial of the corresponding simplicial complex. Recently, we gave the polynomial characterization of daisy cubes as $C_G(x)=W_{G,0^n}(x+1)$ in \cite{zxx26}. Therefore, for any graph $G$, there exists a daisy cube $G'$ satisfying 
$C_G(x)=C_{G'}(x)$ if and only if  $C_G(x-1)$ of graph $G$ can be the $f$-polynomial of some simplicial complex. 
We obtain the following corollary:

\begin{corollary}\label{C=f-poly}
    If $G$ is a tope graph of a lopsided set $L$, $\mathcal{K}$ is its cubical complex, then 
$$C_G(x)=f_\mathcal{K}(x+1).$$
\end{corollary}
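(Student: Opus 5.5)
The plan is to identify the cubical complex $\mathcal{K}$ of $G$ with the simplicial complex of sets strongly shattered by $L$, and then count the $k$-cubes of $G$ direction by direction. Represent $L\subseteq\{0,1\}^n$ as a lopsided (ample) set with tope graph $G=G(L)$ induced in $Q_n$. I will read $\mathcal{K}$ as the simplicial complex
\[
\mathcal{X}(L):=\{A\subseteq[n] \mid L \text{ contains an } A\text{-face } \{x\}\times\{0,1\}^A\},
\]
that is, the family of strongly shattered sets. This family is down-closed, and its faces are the direction sets of the cubes of $G$. Since $f_{i-1}(\mathcal{K})$ counts sets of size $i$ in $\mathcal{X}(L)$, we have $f_{\mathcal{K}}(x+1)=\sum_{B\in\mathcal{X}(L)}(1+x)^{|B|}$. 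So the goal reduces to showing
\[
c_k(G)=\sum_{B\in\mathcal{X}(L)}\binom{|B|}{k}\quad\text{for every }k\ge 0 .
\]

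\textbf{Step 1 (cubes are faces).} I will first check that every induced $Q_k$ in $G$ is an $A$-face $\{x\}\times\{0,1\}^A$ with $|A|=k$. This holds because $G$ is an induced subgraph of $Q_n$, and an induced $Q_k$ in $Q_n$ is a subcube. It follows that $c_k(G)=\sum_{|A|=k}|L_A|$, where
\[
L_A:=\{x\in\{0,1\}^{[n]\setminus A} \mid \{x\}\times\{0,1\}^A\subseteq L\}.
\]

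\textbf{Step 2 (the face sets are ample).} Next I will show that $L_A$ is again ample on the ground set $[n]\setminus A$, and that its strongly shattered sets are exactly $\{B\setminus A \mid A\subseteq B\in\mathcal{X}(L)\}$. One inclusion is direct: a $(B\setminus A)$-face of $L_A$ is the same thing as a $B$-face of $L$. For ampleness I plan to invoke the standard closure of lopsided sets under this "face-set" operation, equivalently under restriction to cubes of fixed direction. This should follow from Lawrence's characterization: for every $A$ and every sign pattern, the corresponding fibres are either empty or full. I expect Step 2 to be the main obstacle, because it is where lopsidedness is genuinely used; for a general partial cube the count would fail.

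\textbf{Step 3 (counting and summing).} Apply the sandwich equality for ample sets, which says that $|L'|$ equals the number of sets strongly shattered by $L'$, to $L'=L_A$. This gives $|L_A|=\#\{B\in\mathcal{X}(L) \mid B\supseteq A\}$. Summing over all $A$ with $|A|=k$ and exchanging the order of summation yields $c_k(G)=\sum_{B\in\mathcal{X}(L)}\binom{|B|}{k}$, which is the identity required above.

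As a cross-check, and as an alternative route, the daisy cube $G'=Q_n(\mathcal{X}(L))$ has vertex set exactly $\mathcal{X}(L)$ (identifying sets with their indicator vectors), so its distance polynomial from $0^n$ is $W_{G',0^n}(x)=f_{\mathcal{K}}(x)$. The characterization $C_{G'}(x)=W_{G',0^n}(x+1)$ from \cite{zxx26} then shows that this $G'$ is the daisy cube witnessing Theorem~\ref{L=D}. Combined with the count above, this gives $C_G(x)=C_{G'}(x)=f_{\mathcal{K}}(x+1)$.
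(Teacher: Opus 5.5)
Your proposal is correct and is essentially the paper's own argument (the proof of Theorem~\ref{L=D}, which yields Corollary~\ref{explicit}): induced cubes of $G$ are faces, so $c_k(G)=\sum_{|A|=k}|\mathcal{L}^A|$; lopsidedness of $\mathcal{L}^A$ gives $|\mathcal{L}^A|=|\underline{\mathcal{X}}(\mathcal{L}^A)|$, and $(\mathcal{L}^A)^Z=\mathcal{L}^{A\cup Z}$ turns this into $\#\{S\in\mathcal{K} \mid A\subseteq S\}$; exchanging the sums then gives $\sum_{S\in\mathcal{K}}(x+1)^{|S|}=f_{\mathcal{K}}(x+1)$. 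Two small fixes: ``fibres are either empty or full'' is not Lawrence's criterion, and the clean justification that $L_A$ is lopsided is the paper's identity $L_A=\{0,1\}^{[n]\setminus A}-\pi_{[n]\setminus A}(\{0,1\}^n-L)$ together with closure of lopsided sets under complement and projection (Lemma~\ref{inher+com}); also, both inclusions in your description of the strongly shattered sets of $L_A$ are purely set-theoretic, so lopsidedness is not needed there.
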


The above result states that when graph $G$ is the tope graph of a lopsided set, $C_G(x-1)$ is the $f$-polynomial of the cubical complex. This naturally raises the question of when $C_G(x-1)$ of graph $G$ is the $f$-polynomial of some simplicial complex. Is the tope graph of a lopsided set the largest partial cube class satisfying the above result? There is a negative answer. The counterexample is as follows. 

Let $G$ (see Figure~\ref{fig:Q4_double_minus}) be the graph obtained by taking the join of $Q_4^{--}$ and $C_6$, denoted by $Q_4^{--} \vee C_6$. As illustrated in Figure~\ref{fig:daisy cube}, there exists a daisy cube $D$ with the same cube polynomial as $G$, that is,
$$C_D(x)=C_G(x)=19+30x+12x^2.$$ However, 
$G$ is not a tope graph of a lopsided set.
Nevertheless, the tope graphs of lopsided sets remain the largest pc-minor closed class of partial cubes satisfying the property stated in Corollary~\ref {C=f-poly}, that is,

\begin{figure}[!t]
    \centering

\begin{tikzpicture}[
    x=0.5cm,
    y=0.5cm,
    vertex/.style={
        circle,
        draw=black,
        fill=black,
        line width=1.5pt,
        inner sep=0pt,
        minimum size=1.6mm
    },
    edge/.style={
        black,
        line width=0.9pt,
        line cap=round,
        line join=round
    }
]


\coordinate (v0001) at (2.25,7.5);

\coordinate (v0011) at (0,6);
\coordinate (v0101) at (3,6);
\coordinate (v1001) at (4.5,6);

\coordinate (v0111) at (0.75,4.5);
\coordinate (v1011) at (2.25,4.5);
\coordinate (v1101) at (5.25,4.5);

\coordinate (v0010) at (0,3);
\coordinate (v0100) at (3,3);
\coordinate (v1000) at (4.5,3);

\coordinate (v0110) at (0.75,1.5);
\coordinate (v1010) at (2.25,1.5);
\coordinate (v1100) at (5.25,1.5);

\coordinate (v1110) at (3,0);

\coordinate (A) at (6.5, 6);
\coordinate (B) at (8.25, 6);
\coordinate (C) at (9.5, 4.5);
\coordinate (D) at (8.25, 3);
\coordinate (E) at (6.5, 3);

\draw[edge] (v1101) -- (A) -- (B)-- (C) -- (D)-- (E) -- (v1101);

\node[vertex] at (A) {};

\node[vertex] at (B) {};
\node[vertex] at (C) {};
\node[vertex] at (D) {};

\node[vertex] at (E) {};

\draw[edge] (v0001) -- (v0011);
\draw[edge] (v0001) -- (v0101);
\draw[edge] (v0001) -- (v1001);

\draw[edge] (v0011) -- (v0010);
\draw[edge] (v0011) -- (v0111);
\draw[edge] (v0011) -- (v1011);

\draw[edge] (v0101) -- (v0100);
\draw[edge] (v0101) -- (v0111);
\draw[edge] (v0101) -- (v1101);

\draw[edge] (v1001) -- (v1000);
\draw[edge] (v1001) -- (v1011);
\draw[edge] (v1001) -- (v1101);

\draw[edge] (v0111) -- (v0110);
\draw[edge] (v1011) -- (v1010);
\draw[edge] (v1101) -- (v1100);

\draw[edge] (v0010) -- (v0110);
\draw[edge] (v0010) -- (v1010);

\draw[edge] (v0100) -- (v0110);
\draw[edge] (v0100) -- (v1100);

\draw[edge] (v1000) -- (v1010);
\draw[edge] (v1000) -- (v1100);

\draw[edge] (v0110) -- (v1110);
\draw[edge] (v1010) -- (v1110);
\draw[edge] (v1100) -- (v1110);


\node[vertex] at (v0001) {};

\node[vertex] at (v0011) {};
\node[vertex] at (v0101) {};
\node[vertex] at (v1001) {};

\node[vertex] at (v0111) {};
\node[vertex] at (v1011) {};
\node[vertex] at (v1101) {};

\node[vertex] at (v0010) {};
\node[vertex] at (v0100) {};
\node[vertex] at (v1000) {};

\node[vertex] at (v0110) {};
\node[vertex] at (v1010) {};
\node[vertex] at (v1100) {};

\node[vertex] at (v1110) {};

\end{tikzpicture}
 \caption{The graph $Q_4^{--}\vee C_6$: the joint graph of the 4-cube with two antipodal vertices removed and a 6-cycle.}
    \label{fig:Q4_double_minus}
    
\hspace{0.5cm}

\begin{tikzpicture}
[ x=0.7cm,
    y=0.7cm,
dot/.style={circle, fill=black, draw=black, inner sep=0pt, minimum size=5pt},line/.style={black, thick}
]

\node[dot] (f) at (-0.9, 0) {};
\node[dot] (e) at (-1.8, 0) {};
\node[dot] (d) at (-2.7, 0) {};
\node[dot] (c) at (-3.6, 0) {};

\node[dot] (b) at (-4.5, 0) {};
\node[dot] (a) at (-5.4, 0) {};
\node[dot] (g) at (0, 0) {};
\node[dot] (h) at (0.9, 0) {};

\node[dot] (i) at (1.8, 0) {};
\node[dot] (j) at (2.7, 0) {};
\node[dot] (k) at (3.6, 0) {};
\node[dot] (l) at (4.5, 0) {};

\node[dot] (A) at (-3.9, -2) {};
\node[dot] (B) at (-2.6, -2) {};
\node[dot] (C) at (-1.3, -2) {};
\node[dot] (D) at (0, -2) {};
\node[dot] (E) at (1.3, -2) {};
\node[dot] (F) at (2.6, -2) {};
\node[dot] (G) at (-0.6, -4) {};
\draw[line] (A) -- (G);
\draw[line] (B) -- (G);
\draw[line] (C) -- (G);
\draw[line] (D) -- (G);
\draw[line] (E) -- (G);
\draw[line] (F) -- (G);

\draw[line] (a) -- (A);
\draw[line] (a) -- (C);

\draw[line] (b) -- (A);
\draw[line] (b) -- (D);

\draw[line] (c) -- (A);
\draw[line] (c) -- (E);

\draw[line] (d) -- (A);
\draw[line] (d) -- (F);

\draw[line] (e) -- (B);
\draw[line] (e) -- (C);

\draw[line] (f) -- (B);
\draw[line] (f) -- (D);

\draw[line] (g) -- (B);
\draw[line] (g) -- (E);

\draw[line] (h) -- (B);
\draw[line] (h) -- (F);

\draw[line] (i) -- (C);
\draw[line] (i) -- (E);

\draw[line] (j) -- (C);
\draw[line] (j) -- (F);

\draw[line] (k) -- (D);
\draw[line] (k) -- (E);

\draw[line] (l) -- (D);
\draw[line] (l) -- (F);

\end{tikzpicture}
\caption{The daisy cube $D$.}
    \label{fig:daisy cube}

\end{figure}

\begin{theorem}\label{pc-m-closed}
Let $\mathcal{G}$ be a pc-minor closed partial cube class. If for any  $G \in \mathcal{G}$, there is a daisy cube $D$ such that $C_G(x)=C_D(x)$, then $\mathcal{G}\subseteq\mathcal{G}_{\mathrm{LOP}}$.
\end{theorem}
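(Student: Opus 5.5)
The plan is to argue by contraposition using the Knauer--Marc characterization of lopsided tope graphs by forbidden pc-minors \cite{KKTM}. Suppose $\mathcal{G}$ is pc-minor closed and contains some $G\notin\mathcal{G}_{\mathrm{LOP}}$. By \cite{KKTM}, $G$ then has a pc-minor isomorphic to a member of an explicit family of forbidden partial cubes. Knauer and Marc also phrase this through antipodal subgraphs: a partial cube is a lopsided tope graph if and only if every antipodal subgraph is a hypercube. The minimal obstructions are therefore antipodal partial cubes that are not hypercubes, with all of their proper pc-minors lopsided. Since $\mathcal{G}$ is pc-minor closed, such an obstruction $H$ lies in $\mathcal{G}$. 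It then suffices to show that no such $H$ is $C$-equivalent to a daisy cube, which contradicts the hypothesis.

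To show that $H$ is not $C$-equivalent to a daisy cube, I would use the criterion recalled before Corollary~\ref{C=f-poly}: $G$ is $C$-equivalent to a daisy cube if and only if $C_G(x-1)$ is the $f$-polynomial of a simplicial complex. Write $C_H(x-1)=\sum_i a_i x^i$, so that $a_i=\sum_{k\ge i}(-1)^{k-i}\binom{k}{i}c_k(H)$.

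The main invariant to test is the constant term $a_0=\sum_k(-1)^k c_k(H)$, which is the Euler characteristic of the cube complex of $H$. For an $f$-polynomial of a simplicial complex this coefficient must equal $f_{-1}=1$. For lopsided tope graphs the cube complex is contractible, and Corollary~\ref{C=f-poly} gives $a_0=1$. For an antipodal non-hypercube partial cube $H$, the cube complex should instead be homotopy equivalent to a sphere $S^{r-1}$, where $r$ is the rank of the corresponding oriented-matroid-like structure (for example, $C_{2m}$ with $m\ge3$ gives $S^1$ and $a_0=2m-2m=0$). This gives $a_0=1+(-1)^{r-1}\neq1$ whenever $r$ is odd.

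When $r$ is even, $a_0=2$, which again fails. More carefully, $a_0=1-\chi(S^{r-1})+\chi(\text{pt})$, so the value is never $1$ because $\chi(S^{r-1})\in\{0,2\}$. Thus in all cases $C_H(x-1)$ has constant term different from $1$, and no daisy cube can share the cube polynomial of $H$. Hence $\mathcal{G}\subseteq\mathcal{G}_{\mathrm{LOP}}$.

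The main obstacle is justifying that the cube complex of a minimal antipodal non-cube obstruction has Euler characteristic different from $1$. The example $Q_4^{--}\vee C_6$ shows that the statement is false for arbitrary non-lopsided graphs, so minimality and antipodality are essential. My first attempt would be to compute $\chi$ directly from the explicit list of excluded pc-minors in \cite{KKTM}: the even cycles $C_{2m}$ with $m\ge3$, $Q_3^{--}$-type graphs and similar ones. For each, I would check $\sum_k(-1)^kc_k\neq1$, for instance $c$-values $(2m,2m)$ for $C_{2m}$. If the list is infinite, I would instead use the antipodal structure. In an antipodal partial cube that is not a cube, the map $v\mapsto\bar v$ is a fixed-point-free involution on the cube complex. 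Combined with the fact that the complex of a minimal obstruction is a sphere (each proper halfspace being lopsided, hence contractible), this pins down the Euler characteristic.
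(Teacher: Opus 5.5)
Your reduction is sound, and in fact needs no minimality: any non-hypercube antipodal subgraph $H$ of $G$ is convex, hence a restriction, hence lies in $\mathcal{G}$. The gap is the step everything rests on, namely $C_H(-1)\neq 1$ for such $H$, which is never proved.

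\begin{itemize}
\item The claim that the cube complex of an antipodal non-hypercube partial cube is homotopy equivalent to $S^{r-1}$ is only asserted, and it is false as stated. The hexagonal prism $C_6\square K_2$ is antipodal (it is the tope graph of a rank-$3$ oriented matroid). It has $C(x)=12+18x+6x^2$, and its cube complex is an annulus, homotopy equivalent to $S^1$, not $S^2$.
\item The fallback for minimal obstructions does not close the gap. Contractibility of the two halfspace complexes only yields
$C_H(-1)=C_{W^+}(-1)+C_{W^-}(-1)-C_{\partial}(-1)=2-C_{\partial}(-1)$,
where $\partial$ is the subgraph induced by the vertices of $W^+$ incident to the chosen $\Theta$-class. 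So you are back to the same question for $\partial$.
\item The list you plan to check is not set up correctly. $C_{2m}$ with $m\ge 4$ is not a minimal obstruction, since contracting a class gives $C_{2m-2}$. Also, $Q_3^{--}$ is just $C_6$.
\item Your Euler characteristic bookkeeping is inconsistent. $1+(-1)^{r-1}$ equals $2$ for odd $r$ and $0$ for even $r$. The expression $1-\chi(S^{r-1})+\chi(\mathrm{pt})=2-\chi(S^{r-1})$ is a different quantity.
\end{itemize}

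The missing idea is the one you mention in passing but never use, and it makes the topology unnecessary.
\begin{itemize}
\item Since $H$ is antipodal, $d_H(v,-_Hv)=\mathrm{idim}(H)$. So $v\mapsto -_Hv$ complements every coordinate of $H$, and is therefore an automorphism of $H$ sending each induced $Q_k$ onto an induced $Q_k$.
\item If $H$ is not a hypercube, every induced cube has $k<\mathrm{idim}(H)$. Hence $-_Hv$ never lies in the cube containing $v$, and no induced cube is mapped to itself.
\item The cubes of each dimension therefore pair up, every $c_k(H)$ is even, and $C_H(-1)$ is even. So $H$ cannot be $C$-equivalent to a daisy cube, because daisy cubes satisfy $C(-1)=1$.
\end{itemize}
This is exactly the argument in the paper's proof of (iii)$\Rightarrow$(i) of Theorem~\ref{C_H=-1}.

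The paper's own proof of the present theorem is shorter still. It uses the Bandelt--Chepoi--Dress--Koolen criterion in graph form (Lemma~\ref{face'}): $G$ is a lopsided tope graph as soon as $C_H(-1)=1$ for every convex subgraph $H$. Each such $H$ is a restriction, hence in $\mathcal{G}$, hence $C$-equivalent to a daisy cube, so $C_H(-1)=1$. No antipodal subgraphs are needed at all.
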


Moreover, we establish a series of subgraph-local polynomial characterizations of tope graphs of lopsided sets.

\begin{theorem}\label{C_H=-1}
Let $G$ be a partial cube. Then the following assertions are equivalent:
\begin{enumerate}[(i)]
    \item $G$ is the tope graph of a lopsided set;
    \item For any antipodal subgraph $H$ of $G$, there exists a daisy cube $H_1$ such that $C_{H}(x)=C_{H_1}(x)$;
    \item For any antipodal subgraph $H$ of $G$, it satisfies $C_{H}(-1)=1$.

\end{enumerate}
\end{theorem}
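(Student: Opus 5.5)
We will prove the cycle of implications (i)$\Rightarrow$(ii)$\Rightarrow$(iii)$\Rightarrow$(i).

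\emph{(i)$\Rightarrow$(ii).} We use that $\mathcal{G}_{\mathrm{LOP}}$ is closed under taking convex subgraphs, and antipodal subgraphs of partial cubes are convex. By the Knauer--Marc characterization \cite{KKTM}, every antipodal subgraph of a lopsided tope graph is then a hypercube $Q_k$. Since $Q_k$ is itself a daisy cube, we may take $H_1=H$. If one prefers to avoid the hypercube description, it suffices to note that $H$ is again in $\mathcal{G}_{\mathrm{LOP}}$ and to apply Theorem~\ref{L=D} to $H$.

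\emph{(ii)$\Rightarrow$(iii).} Let $H_1=Q_n(X)$ be a daisy cube. By the result of \cite{zxx26}, $C_{H_1}(x)=W_{H_1,0^n}(x+1)$. Here $W_{H_1,0^n}(y)=\sum_{v}y^{d(0^n,v)}$ is the distance polynomial from the root. Hence
\[
C_{H_1}(-1)=W_{H_1,0^n}(0)=1,
\]
since only the root lies at distance $0$. Consequently $C_H(-1)=C_{H_1}(-1)=1$. We may also check directly that $C_{Q_k}(-1)=(2-1)^k=1$, which is consistent with this.

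\emph{(iii)$\Rightarrow$(i).} This is the main step, and we argue by contraposition. Suppose $G$ is a partial cube that is not a lopsided tope graph. By \cite{KKTM}, $G$ then has an antipodal subgraph $H$ that is not a hypercube. We must show $C_H(-1)\neq 1$.

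The key observation is that for a partial cube, $C_H(-1)=\sum_k(-1)^kc_k(H)$ is the Euler characteristic of the cube complex of $H$.

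We expect the proof that this Euler characteristic differs from $1$ to be the main obstacle. Our plan is to use the result (Knauer--Marc, building on da~Silva's theory of antipodal partial cubes and Bandelt--Chepoi work on OMs and COMs) that a non-hypercube antipodal partial cube has a cube complex homotopy equivalent to a sphere $S^{r-1}$, with $r\ge 2$ its rank as a ``tope graph of an oriented-matroid-like'' structure. Its Euler characteristic is then $1+(-1)^{r-1}\in\{0,2\}$, and in particular it is $\neq1$.

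If the sphere statement is not available in full generality for antipodal partial cubes, we fall back on a combinatorial argument that uses antipodality directly. The antipodal map $v\mapsto\bar v$ is a fixed-point-free involution of $H$ that maps cubes to cubes. We would pair each cube with its antipode, and the aim is to show that $\chi$ is even, which again excludes $1$. For that we would need to handle self-antipodal cubes, which occur only when $H$ itself is a cube. This parity-type argument is the step where we are least certain, so we would try it first in the contractible-versus-sphere form via the cube complex.
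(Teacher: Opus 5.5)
Your implications (i)$\Rightarrow$(ii) and (ii)$\Rightarrow$(iii) are fine and match the paper. The gap is in (iii)$\Rightarrow$(i): the topological result you plan to invoke is false. The cube complex of a non-hypercube antipodal partial cube need not be homotopy equivalent to a sphere, and its Euler characteristic need not lie in $\{0,2\}$.

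For example, let $H$ be the permutohedron of order $4$. This is the tope graph of the rank-$3$ braid arrangement, an oriented matroid, so $H$ is antipodal. It has $24$ vertices and $36$ edges. It has exactly $6$ four-cycles: each vertex lies on one, since $s_1,s_3$ is the only commuting pair of generators. It contains no $Q_3$. Hence $C_H(-1)=24-36+6=-6$. Its cube complex is the boundary $2$-sphere of the permutohedron with its $8$ open hexagons removed.

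The sphere $S^{r-1}$ of oriented matroid theory is the complex whose cells are the tope graphs of all lower-rank faces (hexagons and so on), not only cubes. Discarding the non-cube cells changes $\chi$ in an uncontrolled way. Moreover, antipodality alone does not make $H$ an OM tope graph in the first place. So your primary route does not yield $C_H(-1)\neq 1$.

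Your fallback is the right idea and is exactly the paper's argument, but you leave its only nontrivial step unproved and rank it second. The missing step is short. Let $l=\mathrm{idim}(H)$ and let $Q$ be an induced $k$-cube of $H$. Its edge directions are $k$ distinct $\Theta$-classes, so $k\le l$. If $k=l$, then $H=Q_l$, since $H$ embeds isometrically in $Q_l$; so $k<l$ when $H$ is not a hypercube. For every $v\in V(Q)$ we have $d_H(v,-_Hv)=l>k=\mathrm{diam}(Q)$, so $-_Hv\notin V(Q)$.

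The map $v\mapsto -_Hv$ is complementation of all $l$ coordinates, so it is an automorphism of $H$. By the previous paragraph it induces a fixed-point-free involution on the set of induced $k$-cubes of $H$. Therefore every $c_k(H)$ is even, so $C_H(-1)$ is even and in particular $\neq 1$. The Knauer--Marc characterization then gives (i). Present this parity argument as the proof and drop the topological detour.
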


The paper is organized as follows. In Section~2, we recall the relevant definitions and introduce the notation used throughout the paper. In Section~3, we collect some useful known lemmas and establish several auxiliary lemmas, and then use these results to prove our main theorems.

\section{Preliminaries}

Throughout this paper, unless stated otherwise, all graphs considered are undirected, finite, and simple. Let $G$ be a graph with vertex set $V(G)$ and edge set $E(G)$. If all pairs of vertices of a subgraph $H$ of $G$ that are adjacent in $G$ are also adjacent in $H$,
then $H$ is an \textit{induced subgraph}. 
For $u,v\in V(G)$, the {\em distance} between $u$ and $v$ is defined as the length of a shortest $u,v$-path, denoted by $d_G(u,v)$. We will omit the subscript $G$ if this causes no confusion. The greatest distance between any two vertices in $G$ is the {\em diameter} of $G$, denoted by $\mathrm{diam}(G)$. The \textit{interval} $I_G(u,v)$ between two vertices $u$ and $v$ is the set of vertices on the shortest paths between $u$ and $v$. 
For a subgraph $H$, if $d_H(u,v) =d_G(u,v)$ for all $u,v \in V(H)$, we say $H$ is an \textit{isometric subgraph}.  
A subgraph $H$ of $G$ is \textit{convex} if for all pairs of vertices in $ H$, all their shortest paths in $G$ stay in $ H$. For any $S\subseteq V(G)$, the smallest convex subgraph of $G$ containing $S$ is called the {\em convex hull} of $S$ and denoted by $\mathrm{conv}(S)$. For $u,v\in V(G)$, $\mathrm{conv}(\{u,v\})$ is denoted by $\mathrm{conv}(u,v)$ for shortly.  
A graph $G$ is called {\em partial cube} if it is isomorphic to an isometric subgraph of $Q_n$ for some $n$. Graham and Pollak \cite{GR71} introduced partial cubes in the study of interconnection networks.

The {\em Djokovi\'c-Winkler relation} (see \cite{dj73,w84}) $\Theta$ is a binary relation on $E(G)$ defined as follows: Let $e=uv$ and $f=xy$ be two edges in $G$, $e\,\Theta\,f\iff d(u,x)+d(v,y)\neq d(u,y)+d(v,x)$. Winkler \cite{w84} proved that a graph $G$ is a partial cube if and only if $G$ is bipartite and $\Theta$ is an equivalence relation on $E(G)$. 
Let $G$ be a partial cube. We call the equivalence class on $E(G)$ {\em $\Theta$-class}. For $e=uv\in E(G)$, we denote the $\Theta$-class containing $uv$ as $F_{uv}$, i.e., $F_{uv}:=\{f\in E(G) \mid f\,\Theta\,e\}$.  
Moreover, we denote $W_{uv}:=\{w\in V(G) \mid d_{G}(u,w)<d_{G}(v,w)\}$. This set is a convex halfspace of $G$. The {\em isometric dimension} of a partial cube $G$ is the smallest integer $n$ such that $G$ can be isometrically embedded into $Q_n$, denoted by $\mathrm{idim}(G)$. This dimension coincides with the number of $\Theta$-classes of $G$  \cite{dj73}.

Let $H$ be a subgraph of $G$. If for any $v\in V(H)$, there exists a vertex $-_{H}v$ such that $\mathrm{conv}(v,-_Hv)=H$, we say that $-_Hv$ is the {\em antipode} of $v$ with respect to $H$. For a partial cube $G$ and $u,v\in V(G)$, $\mathrm{conv}(u,v)=I(u,v)$ since intervals in a partial cube are convex. It is easy to see that if a vertex has an antipode with respect to a subgraph $H$, it is unique. We call a subgraph $H$ of a partial cube $G$ \textit{antipodal} if every vertex $v$ of $H$ has an antipode with respect to $H$. By definition, we can see that antipodal subgraphs are convex. And furthermore, we obtain: 
\begin{observation}\label{obs:d=id}
    Let $H$ be an antipodal subgraph of a partial cube $G$. For any $v\in V(H)$,
    $$d_H(v,-_Hv)=\mathrm{idim}(H).$$
\end{observation}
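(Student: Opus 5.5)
We need to prove Observation \ref{obs:d=id}: for an antipodal subgraph $H$ of a partial cube $G$ and any $v\in V(H)$, $d_H(v,-_Hv)=\mathrm{idim}(H)$.

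First we record two facts about $H$. Since $H$ is antipodal, it is convex in $G$, because $H=\mathrm{conv}(v,-_Hv)$. A convex subgraph of a partial cube is isometric. Hence $H$ is itself a partial cube, $d_H=d_G$ on $V(H)$, and $\mathrm{idim}(H)$ equals the number of $\Theta$-classes of $H$. Moreover, the $\Theta$-relation of $H$ is the restriction of $\Theta$ on $G$, since $\Theta$ is defined through distances, which agree.

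The distance between two vertices $u,w$ of a partial cube equals the number of $\Theta$-classes separating them: a shortest $u,w$-path crosses each $\Theta$-class at most once, and it crosses exactly those classes $F$ whose halfspaces separate $u$ from $w$. So $d_H(v,-_Hv)$ is at most the number of $\Theta$-classes of $H$, i.e. $d_H(v,-_Hv)\le \mathrm{idim}(H)$.

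For the reverse inequality, suppose some $\Theta$-class $F_{xy}$ of $H$ (with $xy\in E(H)$) does not separate $v$ and $-_Hv$. Then both vertices lie in one halfspace, say $W_{xy}\cap V(H)$. Halfspaces of a partial cube are convex, so $I_H(v,-_Hv)\subseteq W_{xy}$. By antipodality, $V(H)=\mathrm{conv}(v,-_Hv)=I(v,-_Hv)$, using that intervals are convex in partial cubes. This contradicts $y\in V(H)\setminus W_{xy}$. Therefore every $\Theta$-class of $H$ separates $v$ from $-_Hv$, which gives equality.

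The only delicate point is the identification of $\mathrm{idim}(H)$ with the number of $\Theta$-classes of $H$ computed inside $H$, together with the convexity of halfspaces; both are standard facts about partial cubes (Djoković, Winkler).
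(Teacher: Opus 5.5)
Your proof is correct. The paper gives no proof of this observation beyond remarking that antipodal subgraphs are convex, and your argument supplies exactly the routine verification it leaves implicit: convexity gives $d_H=d_G$ on $V(H)$ and $\Theta_H=\Theta_G|_{E(H)}$; distances count separating $\Theta$-classes; and a non-separating class would put $V(H)=I(v,-_Hv)$ inside a single halfspace, contradicting $xy\in E(H)$.
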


We follow the terminology about pc-minor closed of
Chepoi, Knauer, and Marc~\cite{ckm20},
Knauer and Marc~\cite{KKTM}, and
Chepoi, Knauer, and Philibert~\cite{ckp20}.
For a $\Theta$-class $F_{uv}$ of $G$, an {\em elementary restriction}
consists of taking one of the complementary halfspaces $W_{uv}$ and
$W_{vu}$. More generally, a {\em restriction} is a subgraph of $G$
induced by the intersection of a set of (non-complementary) halfspaces
of $G$. A {\em contraction} is obtained from $G$ by contracting the edges of $F_{uv}$.

A partial cube $H$ is a {\em partial cube minor}, or
{\em pc-minor} for short, of $G$. 
if $H$ can be obtained from $G$ by a finite,
possibly empty, sequence of elementary restrictions and
contractions of $\Theta$-classes. 
A class $\mathcal{C}$ of partial cubes is said to be {\em closed under
taking pc-minors}, or simply {\em pc-minor closed}, if  $G\in\mathcal{C}$ and $H$ being a pc-minor of $G$ implies that $H\in\mathcal{C}$.

The following proposition shows that for a partial cube, its convex subgraphs are its pc-minors.
\begin{proposition}[\cite{ak16,b89,c86}]\label{pro:ConvexSubgraph}
Let $G$ be a partial cube and $H$ a subgraph of $G$. $H$ is a convex subgraph of $G$ if and only if $H$ is a restriction of $G$.
\end{proposition}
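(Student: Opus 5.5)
The plan is to prove both implications directly from the facts in the preliminaries: every halfspace $W_{uv}$ of a partial cube is convex, and $W_{uv}\cup W_{vu}=V(G)$ because $G$ is bipartite. Throughout, $H$ is nonempty, and subgraphs determined by vertex sets are taken to be induced.

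For \emph{restriction $\Rightarrow$ convex}, suppose $H$ is induced by $\bigcap_{i}W_{u_iv_i}$. Each $W_{u_iv_i}$ is convex, and an intersection of convex vertex sets is convex: a shortest path between two vertices of the intersection stays inside every $W_{u_iv_i}$, hence inside the intersection. So $H$ is convex.

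For \emph{convex $\Rightarrow$ restriction}, let $H$ be convex. First, $H$ is induced: if $a,b\in V(H)$ are adjacent in $G$, the edge $ab$ is the unique shortest $a,b$-path, so it lies in $H$. Let $\mathcal{S}$ be the set of all halfspaces of $G$ containing $V(H)$. Clearly $V(H)\subseteq\bigcap\mathcal{S}$, so it suffices to exclude each $w\notin V(H)$ by some member of $\mathcal{S}$.

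Fix $w\notin V(H)$ and any $x_0\in V(H)$, and take a shortest $x_0,w$-path $P$. Let $x$ be the last vertex of $P$ in $H$ and $y$ its successor on $P$, so $y\notin V(H)$. The subpath of $P$ from $x$ to $w$ is shortest and passes through $y$, so $d(y,w)=d(x,w)-1$, i.e. $w\in W_{yx}$. We claim $V(H)\subseteq W_{xy}$. Suppose instead some $z\in V(H)$ lies in $W_{yx}$. By bipartiteness $d(z,y)=d(z,x)-1$, so $y$ lies on a shortest $x,z$-path, i.e. $y\in I(x,z)$. Convexity of $H$ then gives $y\in V(H)$, a contradiction. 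Hence $W_{xy}\in\mathcal{S}$ and $w\notin W_{xy}$.

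Therefore $V(H)=\bigcap\mathcal{S}$. The halfspaces in $\mathcal{S}$ are pairwise non-complementary, since each contains the nonempty set $V(H)$. So $H$ is a restriction of $G$.

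The step needing the most care is the separation step: we must choose the boundary edge $xy$ so that it simultaneously lies on a geodesic toward $w$ and witnesses that $H$ sits entirely on the $x$-side. Taking the last exit from $H$ along a geodesic handles both requirements at once.
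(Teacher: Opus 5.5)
Your proof is correct. The paper gives no argument for this proposition: it is quoted from the literature and used as a black box. It is used in the direction restriction $\Rightarrow$ convex to see that $G(\mathcal{L}\cap\mathcal{F})$ is convex, and in the direction convex $\Rightarrow$ restriction in the proof of Theorem~\ref{pc-m-closed}. So your write-up is a self-contained proof of the cited fact, and it follows the standard route. Intersections of convex halfspaces are convex. Conversely, any $w\notin V(H)$ is cut off by the halfspace $W_{xy}$ of the edge along which a geodesic from $H$ to $w$ first leaves $H$ for good. Your remarks that a convex subgraph is automatically induced, and that the halfspaces in $\mathcal{S}$ are non-complementary because they share the nonempty set $V(H)$, settle the two formal points the definition of restriction requires.

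One small attribution fix. The equality $d(z,y)=d(z,x)-1$ follows from $d(z,y)<d(z,x)$ and $xy\in E(G)$ alone, by the triangle inequality. Bipartiteness is what gives the dichotomy $V(G)=W_{xy}\cup W_{yx}$, which turns ``$z\notin W_{yx}$'' into ``$z\in W_{xy}$''. You recorded that dichotomy at the outset, so the argument is complete.
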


Let $0^n$ be the all-zero vertex of $Q_n$, and let $X\subseteq V(Q_n)$ be a nonempty subset. The \textit{daisy cube} $Q_n(X)$ generated by $X$, introduced by Klav\v zar and Mollard~\cite{SKM},
is the induced subgraph of $Q_n$ spanned by all vertices lying on some shortest path from $0^n$ to a vertex in $X$.
Equivalently, let $\preceq$ be the partial order on $B^n$ such that
$u_1u_2\cdots u_n \preceq v_1v_2\cdots v_n$ holds whenever $u_i\le v_i$ for all $i\in [n]$.
Then
\[
V(Q_n(X)) \,:=\, \bigl\{ u\in B^n \mid u\preceq x \text{ for some } x\in X \bigr\}.
\]
About the cube polynomials of daisy cubes, Klav\v zar and Mollard proved:
\begin{lemma}[\cite{SKM}]\label{C_G=(-1)}
If partial cube $G$ is a daisy cube, then $C_G(-1)=1.$
\end{lemma}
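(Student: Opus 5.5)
We need to prove Lemma: if $G$ is a daisy cube, then $C_G(-1)=1$.

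The idea is to count cubes in a daisy cube $G=Q_n(X)$ through their minimal vertices, turning $C_G(-1)$ into a sum of alternating binomial sums. Since $V(G)$ is a down-set with respect to $\preceq$, every induced $k$-cube of $G$ (inside $Q_n$, an induced $Q_k$ in a partial cube is a subcube) is determined by its minimal vertex $u$ together with a $k$-set $S$ of coordinates where $u_i=0$. For this cube, the top vertex is $u+e_S$, and all its vertices lie in $G$ exactly when $u+e_S\in V(G)$, because $G$ is downward closed. Conversely, every such pair $(u,S)$ gives an induced $Q_k$ in $G$.

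First I would verify that induced $Q_k$ subgraphs of $G$ are exactly the subcubes of $Q_n$ lying in $V(G)$. This uses the standard fact that a $4$-cycle in $Q_n$ spans a $2$-dimensional face, and induction shows an induced $Q_k$ occupies a $k$-dimensional face. Convexity of cubes in partial cubes gives the same conclusion.

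Next, I would reparametrize each pair $(u,S)$ by its top vertex $w=u+e_S\in V(G)$ and the set $S\subseteq \mathrm{supp}(w)$. This is a bijection onto pairs $(w,S)$ with $w\in V(G)$ and $S\subseteq\mathrm{supp}(w)$, again because $u\preceq w$ forces $u\in V(G)$. Writing $|w|$ for the weight of $w$, we obtain
\[
C_G(x)=\sum_{w\in V(G)}\sum_{k=0}^{|w|}\binom{|w|}{k}x^k=\sum_{w\in V(G)}(1+x)^{|w|}.
\]
Setting $x=-1$, only $w=0^n$ contributes, since $0^n\in V(G)$ (the set $X$ is nonempty). Hence $C_G(-1)=1$. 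This computation also recovers the formula $C_G(x)=W_{G,0^n}(x+1)$ cited in the introduction.

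The main obstacle is the first step: identifying induced $Q_k$ subgraphs with faces of $Q_n$ contained in $V(G)$. I would handle it by the inductive face argument. The rest is routine.
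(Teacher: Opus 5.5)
Your proof is correct and follows essentially the route the paper relies on. The paper only cites Klav\v{z}ar and Mollard for this lemma, and the statement follows from their identity $C_G(x)=\sum_{v\in V(G)}(x+1)^{w(v)}$ (Lemma~\ref{daisy+distance}) by setting $x=-1$; you rederive that identity by counting each cube through its top vertex and a subset of that vertex's support, which is the same double sum $\sum_{S}\sum_{Y\subseteq S}x^{|Y|}$ the paper uses in its proof of Theorem~\ref{L=D}.
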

In combinatorics, a set system $\Delta$ is called an {\em abstract simplicial complex}
(or an {\em independence system}) if it is closed under taking subsets;
that is, if $X\in\Delta$ and $Y\subseteq X$, then $Y\in\Delta$.
In particular, daisy cubes can be viewed as graph-theoretic representations of abstract simplicial complexes.

Let $E$ be a finite set. We denote by
\[
\operatorname{Sign}(E):=\{-1,1\}^{E}
\]
the set of all maps from \(E\) to \(\{-1,1\}\), and by \(\mathbb{R}^{E}\) the set of all maps from \(E\) to \(\mathbb{R}\).
Let $\mathcal{L}$ be a subset of $\mathrm{Sign}(E)$ and $Y$ a subset of $E$. For $t\in\mathrm{Sign}(E-Y)$ and $s\in\mathrm{Sign}(E)$, if $s(e)=t(e)$ for all $e\in E-Y$, we say that $s$ is an {\em extension} of $t$. Denote 
$$\mathcal{L}^Y:=\{t\in \mathrm{Sign}(E-Y) \mid \mbox{every extension of }t\mbox{ belongs to }\mathcal{L}\},$$
and further, the abstract simplicial complex
$$\underline{\mathcal{X}}(\mathcal{L}):=\{Y\subseteq E \mid \mathcal{L}^Y\neq\emptyset\}$$ 
is called the {\em cubical complex} of $\mathcal{L}$.
$\mathcal{L}$ is called a {\em lopsided set} if for every $A\subseteq E$, either $A\in\underline{\mathcal{X}}(\mathcal{L})$, or $E-A\in\underline{\mathcal{X}}(\mathrm{Sign}(E)-\mathcal{L})$ \cite{bcdk06,l83}.
The {\em tope graph} of a lopsided set $\mathcal{L}$ on a finite set $E$, denoted by $G(\mathcal{L})$, is the graph whose vertex set is $\mathcal{L}$. Two vertices $f,g\in\mathcal{L}$ are adjacent if there exists exactly one $e\in E$ such that $f(e)\neq g(e)$ \cite{KKTM}.

For a graph $G$ and $u\in V(G)$, let $w_d(G)$ ($d \ge 0$) be the number of vertices of $G$ at distance $d$ from $u$. The \textit{distance polynomial} of $G$ with respect to $u$, $W_{G,u}(x)$, is defined as
    \[W_{G,u}(x):=\sum_{d\ge0}w_d(G)x^d.\]
    
Let $E$ be a set. the {\em power set} of $E$ is denoted by $2^E$, i.e., $2^E=\{S|S\subseteq E\}$. For $u\in B^n$, the \textit{weight} of $u$ is the number of 1s in word $u$, denoted as $w(u)$.

\section{Proofs of the main results}

Before proving Theorem~\ref{L=D}, we establish some auxiliary lemmas.
\begin{lemma}[\cite{l83}]\label{inher+com}
Let $E$ be a finite set, and $\mathcal{L}$ be a lopsided subset of $\mathrm{Sign}(E)$. Then 
\begin{enumerate}[(i)]
    \item $\mathrm{Sign}(E)-\mathcal{L}$ is lopsided;
    \item If $S$ is a subset of $E$ and $\pi_S \colon \mathbb{R}^E \to \mathbb{R}^S$ is the restriction of $f \in \mathbb{R}^E$ to $S$, then the image $\pi(\mathcal{L})$ of $\mathcal{L}$ is a lopsided subset of $\{-1,1\}^S$.
\end{enumerate}
\end{lemma}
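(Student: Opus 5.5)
The statement is Lawrence's lemma, and the plan is to prove both parts directly from the definition. A lopsided set is characterized by a dichotomy: for every $A\subseteq E$, either $A\in\underline{\mathcal{X}}(\mathcal{L})$, or $E-A\in\underline{\mathcal{X}}(\mathrm{Sign}(E)-\mathcal{L})$. We make this concrete. $A\in\underline{\mathcal{X}}(\mathcal{L})$ means that some subcube of $\mathrm{Sign}(E)$ whose free coordinates are exactly $A$ lies entirely in $\mathcal{L}$. In that subcube the coordinates in $E-A$ are fixed by some $t$ and the coordinates in $A$ range freely.

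\textbf{Part (i).} Write $\mathcal{L}^c:=\mathrm{Sign}(E)-\mathcal{L}$, so that $(\mathcal{L}^c)^c=\mathcal{L}$. We must show that for every $A$, either $A\in\underline{\mathcal{X}}(\mathcal{L}^c)$ or $E-A\in\underline{\mathcal{X}}(\mathcal{L})$.
\begin{enumerate}[(a)]
\item Suppose $E-A\notin\underline{\mathcal{X}}(\mathcal{L})$.
\item Apply the lopsided condition for $\mathcal{L}$ to the set $B:=E-A$.
\item Since $B\notin\underline{\mathcal{X}}(\mathcal{L})$, the condition forces $E-B=A\in\underline{\mathcal{X}}(\mathcal{L}^c)$.
\end{enumerate}
So the condition for $\mathcal{L}^c$ is just the condition for $\mathcal{L}$ read at the complementary set. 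This part is purely formal.

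\textbf{Part (ii).} Put $\mathcal{L}':=\pi_S(\mathcal{L})$ and take $A\subseteq S$. The first step relates cubes in $\mathcal{L}'$ to cubes in $\mathcal{L}$.
\begin{itemize}
\item If $A\cup(E-S)\in\underline{\mathcal{X}}(\mathcal{L})$, then some cube with free coordinates $A\cup(E-S)$ lies in $\mathcal{L}$.
\item Projecting this cube to $S$ gives a cube with free coordinates $A$ inside $\mathcal{L}'$, so $A\in\underline{\mathcal{X}}(\mathcal{L}')$.
\end{itemize}

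Now assume $A\notin\underline{\mathcal{X}}(\mathcal{L}')$. By the contrapositive of the step above, $A\cup(E-S)\notin\underline{\mathcal{X}}(\mathcal{L})$. The lopsided condition for $\mathcal{L}$, applied to $A\cup(E-S)$, then gives that $E-(A\cup(E-S))=S-A$ lies in $\underline{\mathcal{X}}(\mathcal{L}^c)$.

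Concretely, there is a sign vector $t$ on $A\cup(E-S)$ such that every extension of $t$ to $E$ lies outside $\mathcal{L}$.

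\textbf{Main step.} We must show $S-A\in\underline{\mathcal{X}}(\mathrm{Sign}(S)-\mathcal{L}')$. Take the sign vector $t|_A$ on $A$ and consider any $s\in\mathrm{Sign}(S)$ extending $t|_A$.

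Suppose for contradiction that $s\in\mathcal{L}'$. Then $s=\pi_S(u)$ for some $u\in\mathcal{L}$. The vector $u$ agrees with $t$ on $A$, but it need not agree with $t$ on $E-S$. So the cube obtained in the previous paragraph is too small to give a contradiction directly; this is the main obstacle.

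\textbf{Fixing the obstacle.} Instead of the cube with fixed part $A\cup(E-S)$, we want a cube in $\mathcal{L}^c$ whose fixed part is only $A$, i.e. $S-A$ together with all of $E-S$ free. Equivalently, we need to know that $E-A\in\underline{\mathcal{X}}(\mathcal{L}^c)$. This follows from the lopsided condition applied to the set $A$ itself:
\begin{itemize}
\item Suppose $A\in\underline{\mathcal{X}}(\mathcal{L})$. Then some cube with free coordinates $A$ lies in $\mathcal{L}$. Projecting to $S$ gives a cube with free coordinates $A$ in $\mathcal{L}'$, contradicting $A\notin\underline{\mathcal{X}}(\mathcal{L}')$. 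Hence $A\notin\underline{\mathcal{X}}(\mathcal{L})$.
\item By lopsidedness of $\mathcal{L}$ at $A$, we get $E-A\in\underline{\mathcal{X}}(\mathcal{L}^c)$.
\item This gives a sign vector $t'$ on $A$ such that every $u\in\mathrm{Sign}(E)$ with $u|_A=t'$ lies outside $\mathcal{L}$.
\item Consequently every $s\in\mathrm{Sign}(S)$ with $s|_A=t'$ lies outside $\mathcal{L}'$: any preimage $u$ of $s$ satisfies $u|_A=t'$, so $u\notin\mathcal{L}$.
\item Therefore $S-A\in\underline{\mathcal{X}}(\mathrm{Sign}(S)-\mathcal{L}')$, which is exactly the lopsided condition for $\mathcal{L}'$ at $A$.
\end{itemize}
So the right choice of test set is $A$ itself, not $A\cup(E-S)$, and with that choice the argument closes.
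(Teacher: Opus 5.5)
Your proof is correct. The paper gives no argument for this lemma; it simply cites Lawrence~\cite{l83}. So your proposal is a different route: a self-contained check from the dichotomy definition adopted in the preliminaries.

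Part (i) is, as you say, just the dichotomy for $\mathcal{L}$ read at the set $E-A$.

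In part (ii), two facts close the argument, and both hold for $A\subseteq S$:
\begin{enumerate}
\item A cube in $\mathcal{L}$ with free coordinates $A$ projects onto a cube in $\pi_S(\mathcal{L})$ with free coordinates $A$, because the coordinates in $E-S$ are simply forgotten.
\item Suppose every $u\in\mathrm{Sign}(E)$ with $u|_A=t'$ lies outside $\mathcal{L}$. Then every $s\in\mathrm{Sign}(S)$ with $s|_A=t'$ lies outside $\pi_S(\mathcal{L})$, since any preimage $u$ of $s$ satisfies $u|_A=s|_A=t'$.
\end{enumerate}
Hence the dichotomy for $\pi_S(\mathcal{L})$ at $A$ follows from the dichotomy for $\mathcal{L}$ at the same set $A$.

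An equivalent way to see this is through shattering. The paper's definition says that every set shattered by $\mathcal{L}$ lies in $\underline{\mathcal{X}}(\mathcal{L})$. A subset of $S$ is shattered by $\pi_S(\mathcal{L})$ exactly when it is shattered by $\mathcal{L}$, and membership in $\underline{\mathcal{X}}$ passes to projections.

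The detour through $A\cup(E-S)$ is not wrong, but it is unnecessary. A clean write-up should keep only the argument in your \emph{fixing the obstacle} paragraph.

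Your approach makes the paper self-contained exactly where Lemma~\ref{inher} uses both (i) and (ii). It also removes the need to check that Lawrence's original formulation of lopsidedness matches the one used here. The citation saves only space.
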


\begin{lemma}\label{inher}
If $\mathcal{L}$ is lopsided, then $\mathcal{L}^Y$ is lopsided for any $Y \subseteq E$.
\end{lemma}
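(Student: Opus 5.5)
The plan is to verify the defining dichotomy directly for $\mathcal{L}^Y\subseteq \mathrm{Sign}(E-Y)$, reducing it to the dichotomy for $\mathcal{L}$ applied to the set $A\cup Y$. Lemma~\ref{inher+com} is not needed; the argument only unwinds the definitions of $\mathcal{L}^Y$ and of the cubical complex.

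\textbf{Step 1 (iterated cube operation).} For $Z\subseteq E-Y$, I claim $(\mathcal{L}^Y)^Z=\mathcal{L}^{Y\cup Z}$ as subsets of $\mathrm{Sign}(E-Y-Z)$. Take $t\in \mathrm{Sign}(E-Y-Z)$.
\begin{itemize}
\item Every extension of $t$ to $E-Y$ lies in $\mathcal{L}^Y$ exactly when every extension of every such extension to $E$ lies in $\mathcal{L}$.
\item The extensions to $E$ obtained this way are precisely all extensions of $t$ to $E$.
\end{itemize}
Consequently
$$\underline{\mathcal{X}}(\mathcal{L}^Y)=\{Z\subseteq E-Y \mid Y\cup Z\in\underline{\mathcal{X}}(\mathcal{L})\}.$$

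\textbf{Step 2 (the dichotomy).} Fix $A\subseteq E-Y$ and apply lopsidedness of $\mathcal{L}$ to the set $A\cup Y\subseteq E$.
\begin{itemize}
\item If $A\cup Y\in\underline{\mathcal{X}}(\mathcal{L})$, then $A\in\underline{\mathcal{X}}(\mathcal{L}^Y)$ by Step 1, and we are done.
\item Otherwise $E-(A\cup Y)\in\underline{\mathcal{X}}(\mathrm{Sign}(E)-\mathcal{L})$. So there is $t\in\mathrm{Sign}(A\cup Y)$ all of whose extensions to $E$ lie outside $\mathcal{L}$.
\end{itemize}
It remains to show that, in the second case, $(E-Y)-A\in\underline{\mathcal{X}}(\mathrm{Sign}(E-Y)-\mathcal{L}^Y)$.

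\textbf{Step 3 (witness for the complement).} Put $t':=t|_A\in\mathrm{Sign}(A)=\mathrm{Sign}((E-Y)-A)$, viewed as a sign vector on the complement of $(E-Y)-A$ in $E-Y$. Let $s'\in\mathrm{Sign}(E-Y)$ be any extension of $t'$.
\begin{itemize}
\item Define $s\in\mathrm{Sign}(E)$ by $s|_{E-Y}=s'$ and $s|_Y=t|_Y$.
\item Then $s$ agrees with $t$ on $A\cup Y$, so $s$ extends $t$, and hence $s\notin\mathcal{L}$.
\item But $s$ is an extension of $s'$, so $s'$ has an extension not in $\mathcal{L}$, i.e.\ $s'\notin\mathcal{L}^Y$.
\end{itemize}
Thus every extension of $t'$ lies in $\mathrm{Sign}(E-Y)-\mathcal{L}^Y$. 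Therefore $t'\in(\mathrm{Sign}(E-Y)-\mathcal{L}^Y)^{(E-Y)-A}$, this set is nonempty, and $(E-Y)-A$ belongs to the required cubical complex.

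This proves that $\mathcal{L}^Y$ is lopsided. The only delicate point is bookkeeping: the ground set changes from $E$ to $E-Y$, and the witness $t$ must be restricted to $A$ while its values on $Y$ are reused when extending. There is no genuine obstacle. The degenerate case $\mathcal{L}^Y=\emptyset$ needs no separate treatment, since the argument above never assumes $\mathcal{L}^Y$ is nonempty.
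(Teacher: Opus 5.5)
Your proof is correct, but it takes a different route from the paper. The paper's argument is a short reduction. It observes the identity
\[
\mathcal{L}^Y=\mathrm{Sign}(E-Y)-\pi_{E-Y}\bigl(\mathrm{Sign}(E)-\mathcal{L}\bigr),
\]
which holds because $t$ fails to lie in $\mathcal{L}^Y$ exactly when some extension of $t$ lies outside $\mathcal{L}$. It then applies Lawrence's closure properties (Lemma~\ref{inher+com}) three times: complementation, then projection, then complementation again.

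You instead verify the defining dichotomy for $\mathcal{L}^Y$ directly, by applying lopsidedness of $\mathcal{L}$ to $A\cup Y$ and transporting the witness. In the first case you use the identity $(\mathcal{L}^Y)^Z=\mathcal{L}^{Y\cup Z}$. In the second you restrict $t$ to $A$ and reuse $t|_Y$ when extending. The ground-set bookkeeping is right, since $(E-Y)-((E-Y)-A)=A$, and the argument also covers the degenerate cases $\mathcal{L}^Y=\varnothing$ and $Y=E$ without change.

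The paper's route is shorter and more modular, but it relies on the cited closure lemma. Yours is fully self-contained; your Step~3 in effect inlines the one piece of the ``projection of the complement'' argument that is actually needed. Your Step~1 also proves $(\mathcal{L}^Y)^Z=\mathcal{L}^{Y\cup Z}$ for an arbitrary $\mathcal{L}\subseteq\mathrm{Sign}(E)$ straight from the definitions. The paper later quotes exactly this identity from the literature as a ``property of lopsided sets'' in the proof of Theorem~\ref{L=D}.
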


\begin{proof}
If $t \notin \mathcal{L}^Y$, then there exists an extension $s \in \{-1,1\}^E$ of $t$ that does not belong to $\mathcal{L}$. Thus, $s \in \{-1,1\}^E-\mathcal{L}$, $\pi_{E-Y}(s)=t$. Therefore, we have
    $$\mathcal{L}^Y=\{-1,1\}^{E-Y}-\pi_{E-Y}(\{-1,1\}^E-\mathcal{L}).$$
By Lemma~\ref{inher+com}, $\mathcal{L}^Y$ is lopsided.
\end{proof}

\begin{lemma}[\cite{SKM}]\label{daisy+distance}
If $G$ is a daisy cube, then $$C_G(x)=W_{G,0^n}(x+1)=\sum\limits_{v\in V(G)}(x+1)^{w(v)}.$$
\end{lemma}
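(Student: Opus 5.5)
The plan is to count induced $k$-cubes of a daisy cube $G=Q_n(X)$ by their ``top'' vertex in the order $\preceq$, which turns the cube polynomial into a sum over vertices.

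\textbf{Step 1: induced cubes are faces.} I would first show that every induced subgraph of $Q_n$ isomorphic to $Q_k$ is a \emph{face}. A face is the set of all words that agree with a fixed word outside a $k$-set $S\subseteq[n]$ of free coordinates.
\begin{itemize}
\item Each edge of $Q_n$ flips exactly one coordinate.
\item In any $4$-cycle of $Q_n$, opposite edges flip the same coordinate.
\item In $Q_k$, the $\Theta$-classes are generated by opposite edges of squares. Hence every $\Theta$-class of the copy of $Q_k$ uses a single coordinate of $Q_n$.
\item Distinct $\Theta$-classes must use distinct coordinates. Otherwise two antipodal vertices of the copy, at distance $k$ in $Q_k$, would be at distance less than $k$ in $Q_n$. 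This is impossible, because paths along distinct classes change distinct coordinates and the copy is isometric in $Q_n$ (it is a face in the end; alternatively use Hamming distance directly).
\end{itemize}
So the vertex set is exactly $\{u : u_i=a_i \text{ for } i\notin S\}$ for some $k$-set $S$ and some fixed word $a$. Conversely, every face all of whose vertices lie in $V(G)$ induces a $Q_k$ in $G$, since $G$ is an induced subgraph of $Q_n$.

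\textbf{Step 2: the top-vertex bijection.} Each face with free set $S$ has a unique $\preceq$-maximal vertex $v$, obtained by setting all coordinates in $S$ to $1$. Then $S\subseteq\mathrm{supp}(v)$, and the face is determined by the pair $(v,S)$. The key point is that $V(G)$ is a down-set for $\preceq$, since $u\preceq v\preceq x\in X$ gives $u\preceq x$. Every vertex of the face lies below $v$, so the face lies in $G$ if and only if $v\in V(G)$. Hence the induced $k$-cubes of $G$ are in bijection with pairs $(v,S)$, where $v\in V(G)$ and $S$ is a $k$-subset of $\mathrm{supp}(v)$. This gives
\[
c_k(G)=\sum_{v\in V(G)}\binom{w(v)}{k}.
\]

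\textbf{Step 3: summing up.} Summing over $k$ with the binomial theorem gives $C_G(x)=\sum_{v\in V(G)}(x+1)^{w(v)}$. Since $d(0^n,v)=w(v)$ in $Q_n$ and $G$ is isometric, grouping the vertices by weight yields $W_{G,0^n}(x+1)$.

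The only step needing real care is Step 1, that induced $Q_k$'s in a subgraph of a hypercube are genuine faces. This is standard, and I would handle it by the $\Theta$-class argument above.
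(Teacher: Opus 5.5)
The paper gives no proof of this lemma; it is quoted from Klav\v zar and Mollard~\cite{SKM}, so there is no in-paper argument to compare with. Your top-vertex count is a correct, self-contained proof. Its summation step is the same device the paper uses in the proof of Theorem~\ref{L=D}. There, the number of cubes with free set $Y$ is shown to equal the number of simplices $S\supseteq Y$ of $\underline{\mathcal{X}}(\mathcal{L})$, and then $\sum_S\sum_{Y\subseteq S}x^{|Y|}=\sum_S(x+1)^{|S|}$. For a daisy cube, your map (cube $\mapsto$ top vertex $v$) makes that equality an explicit bijection, with $\mathrm{supp}(v)$ playing the role of the simplex. This works only because $V(G)$ is a down-set. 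For general lopsided sets there is no global top vertex, and the paper needs the counting identity $|\mathcal{L}^Y|=|\underline{\mathcal{X}}(\mathcal{L}^Y)|$ instead. Your Step~1 also justifies the identification of induced cubes with faces, which the paper takes for granted.

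One justification in Step~1 should be replaced. To show that distinct $\Theta$-classes of the copy use distinct coordinates, you appeal to the copy being isometric in $Q_n$. The remark ``it is a face in the end'' makes this circular. Use injectivity instead. Suppose classes $i\neq j$ both flipped coordinate $c$, and let $u$ be a vertex of the copy. Its neighbours along class $i$ and along class $j$ are distinct vertices of $Q_k$, yet both would equal $u$ with coordinate $c$ flipped, a contradiction. Hence the $2^k$ vertices of the copy are $a\oplus\sum_{c\in T}e_c$ for $T\subseteq S$ with $|S|=k$, which is a face.

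Similarly, Step~3 does not need isometry of $G$. Because $V(G)$ is a down-set, turning the $1$s of $v$ into $0$s one at a time gives a path of length $w(v)$ inside $G$. Since distances in $G$ are at least those in $Q_n$, $d_G(0^n,v)=w(v)$; note $0^n\in V(G)$ since $X\neq\emptyset$.
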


With these lemmas in hand, we now prove Theorem~\ref{L=D}.

\begin{proof}[\textbf{Proof of Theorem \ref{L=D}}]
Let $G$ be a tope graph of a lopsided set $\mathcal{L}$.
We construct a bijection $g \colon 2^E \to \{0,1\}^E$ such that for every $S \in 2^E$, we have $g(S) = (g_1(S), g_2(S),\dots,g_{|E|}(S))$. Here, 
\[
g_i(S) =
\begin{cases}
1, & i \in S; \\
0, & i \notin S.
\end{cases}
\]
Obviously, for any $S \in 2^E$,
\begin{equation}\label{eq:weightg=size}
    w(g(S))=|S|.
\end{equation}

Let $X=g(\underline{\mathcal{X}}(\mathcal{L}))=\{g(S) \mid S\in\underline{\mathcal{X}}(\mathcal{L})\}$. 
Then $G':=Q_{|E|}(X)$ is a daisy cube generated by $X$ and $V(G)=X$, since $\underline{\mathcal{X}}(\mathcal{L})$ is a simplicial complex. By Lemma~\ref{daisy+distance}, we have

\[
C_{G'}(x)=W_{G',0^n}(x+1)=\sum_{g(S) \in X}(x+1)^{w(g(S))}.
\]
We now turn to the top graph $G$ of $\mathcal{L}$. Since $\mathcal{L}$ is lopsided, $\mathcal{L}^Y$ is also lopsided for any $Y \subseteq E$ by Lemma~\ref{inher}. By the definition of a lopsided set, we have
$|\mathcal{L}^Y|=|\underline{\mathcal{X}}(\mathcal{L}^Y)|$. Using the property of lopsided sets \cite{bcdk06} that $(\mathcal{L}^Y)^Z=\mathcal{L}^{Y \cup Z}$ for any $Y,Z\subseteq E$, we compute the size of the set $\mathcal{L}^Y$ as follows.
\begin{align*}
|\underline{\mathcal{X}}(\mathcal{L}^Y)|&=|\{Z \subseteq X-Y \mid (\mathcal{L}^Y)^Z\neq \varnothing \}|\\
&=|\{Z \subseteq X-Y \mid \mathcal{L}^{Y \cup Z} \neq \varnothing\}|\\
&=|\{S \subseteq \underline{\mathcal{X}}(\mathcal{L}) \mid Y \subseteq S\}|.
\end{align*}

Since the number of $|Y|$-cubes in $G$ associated with $Y$ is equal to the size of the set $\mathcal{L}^Y$, the cube polynomial of $G$ is as follows. 
\begin{align*}
C_G(x)&=\sum_{Y \subseteq E}|\mathcal{L}^Y|x^{|Y|}\\
    &=\sum_{Y \subseteq E}|\underline{\mathcal{X}}(\mathcal{L}^Y)|x^{|Y|}\\
    &=\sum_{S \in \underline{\mathcal{X}}(\mathcal{L})}\sum_{Y \subseteq S}x^{|Y|}\\
    &=\sum_{S \in \underline{\mathcal{X}}(\mathcal{L})}(x+1)^{|S|}.
\end{align*}
By Eq.\eqref{eq:weightg=size}, we have $C_G(x)=C_{G'}(x)$. This completes the proof.
\end{proof}

From the proof of Theorem~\ref{L=D}, we derive an explicit formula for the cube polynomial of the tope graph of a lopsided set. 
    
\begin{corollary}\label{explicit}
    If $G$ is a tope graph of a lopsided set $L$, $\mathcal{K}$ is its cubical complex, then 
$$C_G(x)=\sum_{S \in \mathcal{K}}(x+1)^{|S|}.$$
\end{corollary}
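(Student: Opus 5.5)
The plan is to read off the formula from the last chain of equalities in the proof of Theorem~\ref{L=D}, making each step explicit. Take $\mathcal{K}=\underline{\mathcal{X}}(\mathcal{L})$. We will compute $C_G(x)$ by grouping the induced cubes of $G$ according to the set of coordinates along which they vary.

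\textbf{Step 1: cubes as pairs $(Y,t)$.} Since $G=G(\mathcal{L})$ is an induced subgraph of the hypercube on $\mathrm{Sign}(E)$, every induced $k$-cube of $G$ is a subcube of this hypercube. Such a subcube is determined by two data: a set $Y\subseteq E$ with $|Y|=k$ of free coordinates, and a fixed sign vector $t\in\mathrm{Sign}(E-Y)$ on the remaining coordinates. The subcube lies in $G$ exactly when every extension of $t$ belongs to $\mathcal{L}$, that is, when $t\in\mathcal{L}^Y$. Hence
\[
c_k(G)=\sum_{|Y|=k}|\mathcal{L}^Y|, \qquad C_G(x)=\sum_{Y\subseteq E}|\mathcal{L}^Y|\,x^{|Y|}.
\]

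\textbf{Step 2: counting $|\mathcal{L}^Y|$.} By Lemma~\ref{inher}, $\mathcal{L}^Y$ is lopsided. I then invoke the standard identity from \cite{bcdk06,l83} that a lopsided set has exactly as many elements as its cubical complex: $|\mathcal{L}^Y|=|\underline{\mathcal{X}}(\mathcal{L}^Y)|$. Using $(\mathcal{L}^Y)^Z=\mathcal{L}^{Y\cup Z}$, the sets $Z\subseteq E-Y$ with $(\mathcal{L}^Y)^Z\neq\emptyset$ correspond bijectively, via $Z\mapsto Y\cup Z$, to the sets $S\in\mathcal{K}$ with $Y\subseteq S$. Therefore
\[
|\mathcal{L}^Y|=\#\{S\in\mathcal{K}\mid Y\subseteq S\}.
\]

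\textbf{Step 3: double counting.} Substituting and exchanging the order of summation gives
\[
C_G(x)=\sum_{S\in\mathcal{K}}\sum_{Y\subseteq S}x^{|Y|}=\sum_{S\in\mathcal{K}}(x+1)^{|S|},
\]
where the last equality is the binomial theorem. (Grouping instead by $|S|=i$ yields $\sum_i f_{i-1}(\mathcal{K})(x+1)^i=f_{\mathcal{K}}(x+1)$, which is Corollary~\ref{C=f-poly}.)

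\textbf{Main obstacle.} The delicate point is Step 2. First, the identity $|\mathcal{L}'|=|\underline{\mathcal{X}}(\mathcal{L}')|$ for lopsided $\mathcal{L}'$ is not part of the definition as stated here; it must be cited from \cite{bcdk06}, or proved by induction on $|E|$ via Lemma~\ref{inher+com}. Second, the case $\mathcal{L}^Y=\emptyset$ must be handled: then $\underline{\mathcal{X}}(\mathcal{L}^Y)=\emptyset$, and no $S\in\mathcal{K}$ contains $Y$, since $\mathcal{K}$ is closed under subsets and $\mathcal{L}^S\neq\emptyset$ would force $\mathcal{L}^Y\neq\emptyset$. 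So both sides are $0$ and the identity holds in this case too.
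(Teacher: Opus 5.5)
Your proposal is correct and matches the paper's argument: the corollary is read off from the final chain of equalities in the proof of Theorem~\ref{L=D}. That chain likewise writes $C_G(x)=\sum_{Y\subseteq E}|\mathcal{L}^Y|x^{|Y|}$, rewrites $|\mathcal{L}^Y|=|\underline{\mathcal{X}}(\mathcal{L}^Y)|=\#\{S\in\mathcal{K}\mid Y\subseteq S\}$ via $(\mathcal{L}^Y)^Z=\mathcal{L}^{Y\cup Z}$, and then swaps the sums and applies the binomial theorem. Your remark that $|\mathcal{L}'|=|\underline{\mathcal{X}}(\mathcal{L}')|$ for lopsided $\mathcal{L}'$ must be cited from \cite{bcdk06}, rather than taken as part of the definition, is more careful than the paper's wording.
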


Let $H(E)$ denote the hypercube with vertex set
$\operatorname{Sign}(E)=\{-1,+1\}^{E}$, in which two vertices
are adjacent if and only if they differ in exactly one coordinate.
A face of $H(E)$ is a subcube obtained by fixing the signs on
a subset of $E$. We identify each face with its vertex set.
For a set $\mathcal{L}\subseteq \mathrm{Sign}(E)$, let
\begin{equation}\label{eq:fi}
    f'_i(\mathcal{L}):=\#\bigcup\{\mathcal{L}^Y \mid Y \subseteq E, \#Y=i\}.
\end{equation}

\begin{lemma}[\cite{bcdk06}]\label{face}
A nonempty set $\mathcal{L}\subseteq \operatorname{Sign}(E)$ is lopsided if and only if
\[
\sum_{i\ge 0} (-1)^i f'_i(\mathcal{L}\cap \mathcal{F}) = 1 
\]
holds for all faces $\mathcal{F}$ of $H(E)$ such that
$\mathcal{L}\cap\mathcal{F}\neq\varnothing$.
\end{lemma}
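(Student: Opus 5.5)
This is the criterion of Bandelt, Chepoi, Dress and Koolen \cite{bcdk06}. I would prove it by rereading $f'_i$ as a cube count. A pair $(Y,t)$ with $\#Y=i$ and $t\in\mathcal{L}^Y$ is exactly an $i$-dimensional face of $H(E)$ contained in $\mathcal{L}$. Hence $\sum_i(-1)^i f'_i(\mathcal{L})=C_{G}(-1)$, where $G$ is the subgraph of $H(E)$ induced by $\mathcal{L}$; this is the Euler characteristic $\chi(\mathcal{L})$ of the cube complex of $\mathcal{L}$. The same reading applies to every $\mathcal{L}\cap\mathcal{F}$.

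\emph{Necessity.} Let $\mathcal{L}$ be lopsided and $\mathcal{F}$ a face obtained by fixing the signs on $A\subseteq E$. Then $\mathcal{L}\cap\mathcal{F}$, viewed in $\mathrm{Sign}(E-A)$, is a fiber. The plan is to show that it is lopsided by the same complement/projection trick as in Lemma~\ref{inher}: pass to the complement, restrict, and complement again, using Lemma~\ref{inher+com}. The computation in the proof of Theorem~\ref{L=D} gives $\sum_Y|\mathcal{L}'^Y|x^{|Y|}=\sum_{S\in\underline{\mathcal{X}}(\mathcal{L}')}(x+1)^{|S|}$ for any lopsided $\mathcal{L}'$, in particular for $\mathcal{L}'=\mathcal{L}\cap\mathcal{F}$. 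At $x=-1$ only the term $S=\varnothing$ survives. That term equals $1$ because $\mathcal{L}'\neq\varnothing$ forces $\varnothing\in\underline{\mathcal{X}}(\mathcal{L}')$.

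\emph{Sufficiency.} I would argue by induction on $|E|$. For $e\in E$, let $\mathcal{L}^{\pm}\subseteq\mathrm{Sign}(E-e)$ be the two halves of $\mathcal{L}$ on the facets $e=\pm1$. Every face of a facet is a face of $H(E)$, so the hypothesis passes to $\mathcal{L}^{\pm}$, and by induction each nonempty half is lopsided. Cubes of $\mathcal{L}$ either lie in one half or use direction $e$. The latter correspond bijectively to cubes of $\mathcal{L}^{\{e\}}=\mathcal{L}^+\cap\mathcal{L}^-$, with dimension shifted by one. This gives the additivity formula
\[
\chi(\mathcal{L})=\chi(\mathcal{L}^+)+\chi(\mathcal{L}^-)-\chi(\mathcal{L}^+\cap\mathcal{L}^-).
\]
So when both halves are nonempty, $\chi(\mathcal{L})=1$ forces $\chi(\mathcal{L}^+\cap\mathcal{L}^-)=1$. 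The same holds on every face.

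\emph{Main obstacle.} The hard step is turning these Euler characteristic identities into lopsidedness of $\mathcal{L}$ itself. I would use the recursive characterization of ample sets: $\mathcal{L}$ is lopsided iff, for some (equivalently every) $e$, both $\mathcal{L}^+\cup\mathcal{L}^-$ and $\mathcal{L}^+\cap\mathcal{L}^-$ are lopsided. Equivalently, I would use the extremality $|\mathcal{L}|=|\underline{\mathcal{X}}(\mathcal{L})|$ together with Pajor's inequality $|\underline{\mathcal{X}}(\mathcal{L})|\le|\mathcal{L}|$. Concretely, I would take a minimal face $\mathcal{F}$ for which $\mathcal{L}\cap\mathcal{F}$ is not lopsided, so that all proper fibers are lopsided. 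I would then try to show that a strict Pajor inequality in the cube counts forces $\chi(\mathcal{L}\cap\mathcal{F})\neq1$. For this I would compare $\sum_Y|\mathcal{L}^Y|x^{|Y|}$ with $\sum_{S\in\underline{\mathcal{X}}(\mathcal{L})}(x+1)^{|S|}$, which agree on all proper fibers and can differ only in the top-level term. This matching of the defect with the Euler characteristic is where I expect the real work, and where the argument of \cite{bcdk06} would have to be followed closely.
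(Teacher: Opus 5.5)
The paper does not prove this lemma; it quotes it from \cite{bcdk06}. Your sketch therefore has to stand on its own, and the sufficiency half does not. You explicitly defer the decisive step, and the tool you propose for it is false. The criterion ``$\mathcal{L}$ is lopsided iff $\mathcal{L}^+\cup\mathcal{L}^-$ and $\mathcal{L}^+\cap\mathcal{L}^-$ are lopsided'' already fails for $E=\{e,f\}$ and $\mathcal{L}=\{(+,+),(-,-)\}$. For either coordinate, the union is the whole cube on the other coordinate and the intersection is empty, and all four fibers are single points. Yet $\{e\}\notin\underline{\mathcal{X}}(\mathcal{L})$ and $\{f\}\notin\underline{\mathcal{X}}(\mathrm{Sign}(E)-\mathcal{L})$, so $\mathcal{L}$ is not lopsided. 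Here $\chi(\mathcal{L})=2$, so the Euler characteristic has to do real work. Your minimal-face plan also aims at the wrong objects. The difference $\sum_Y|\mathcal{L}^Y|x^{|Y|}-\sum_{S\in\underline{\mathcal{X}}(\mathcal{L})}(x+1)^{|S|}$ is governed by the reductions $\mathcal{L}^Y$, not by the fibers $\mathcal{L}\cap\mathcal{F}$. Lopsided fibers do not make reductions lopsided: for $Q_3^{--}\cong C_6$ every proper fiber is lopsided, but $\mathcal{L}^{\{e\}}$ is a pair of antipodal vertices of a square.

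The missing step is one more use of your own additivity formula. Suppose a face $\mathcal{F}$ of $H(E-\{e\})$ meets $\mathcal{L}^{\{e\}}=\mathcal{L}^+\cap\mathcal{L}^-$. Then $\mathcal{F}\times\{+1\}$, $\mathcal{F}\times\{-1\}$ and $\mathcal{F}\times\{\pm1\}$ are faces of $H(E)$ meeting $\mathcal{L}$, so $\chi(\mathcal{L}^{\{e\}}\cap\mathcal{F})=1+1-1=1$. By induction on $|E|$, $\mathcal{L}^{\{e\}}$ is lopsided for every $e$. By Lemma~\ref{inher}, so is $\mathcal{L}^Y=(\mathcal{L}^{\{e\}})^{Y-\{e\}}$ for every $Y$ containing $e$, i.e. for every $Y\neq\varnothing$. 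The identity $(\mathcal{L}^Y)^Z=\mathcal{L}^{Y\cup Z}$ holds for an arbitrary set, so $|\underline{\mathcal{X}}(\mathcal{L}^Y)|=\#\{S\in\underline{\mathcal{X}}(\mathcal{L})\mid Y\subseteq S\}$, and hence for every $\mathcal{L}$
\[
\sum_{Y\subseteq E}|\mathcal{L}^Y|\,x^{|Y|}-\sum_{S\in\underline{\mathcal{X}}(\mathcal{L})}(x+1)^{|S|}=\sum_{Y\subseteq E}\bigl(|\mathcal{L}^Y|-|\underline{\mathcal{X}}(\mathcal{L}^Y)|\bigr)\,x^{|Y|}.
\]
All terms with $Y\neq\varnothing$ vanish by extremality of the lopsided sets $\mathcal{L}^Y$. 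Evaluating at $x=-1$ gives $\chi(\mathcal{L})-1=|\mathcal{L}|-|\underline{\mathcal{X}}(\mathcal{L})|$. So $\chi(\mathcal{L})=1$ forces $|\mathcal{L}|=|\underline{\mathcal{X}}(\mathcal{L})|$, which characterizes lopsided sets (Lawrence), and no separate minimal-face argument is needed.

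There is also a smaller flaw in the necessity half: the complement/projection trick of Lemma~\ref{inher} cannot produce a fiber. It either yields some $\mathcal{L}^Y$, or, if ``restrict'' means intersecting $\mathrm{Sign}(E)-\mathcal{L}$ with $\mathcal{F}$, it is circular. Argue directly instead. For the fiber at $e=+1$ and $B\subseteq E-\{e\}$, apply the definition to $A=B$ and to $A=B\cup\{e\}$. If neither certifies the fiber, you obtain a face with free coordinates $B$ inside the fiber at $e=-1$ and a face with free coordinates $E-B-\{e\}$ inside its complement. These two faces share a vertex, which is a contradiction.
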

By definition of restriction, for a face $\mathcal{F}$ of $H(E)$, the tope graph $G(\mathcal{L}\cap \mathcal{F})$ of $\mathcal{L}\cap \mathcal{F}$ can be obtained by performing a sequence of corresponding elementary restrictions on $G(\mathcal{L})$. Combined with Proposition \ref{pro:ConvexSubgraph}, $G(\mathcal{L}\cap \mathcal{F})$ is a convex subgraph of $G(\mathcal{L})$. By the definition \eqref{eq:fi}, $f'_i(\mathcal{L}\cap \mathcal{F})$ is equal to the number of $i$-cubes of $G(\mathcal{L}\cap \mathcal{F})$, i.e., $c_i(G(\mathcal{L}\cap \mathcal{F}))$. Thus, Lemma \ref{face} can be expressed in graph-theoretical terms as follows.
\begin{lemma}\label{face'}
    A partial cube $G$ is a tope graph of a lopsided set if and only if 
    $$
    C_H(-1)=1
    $$
    holds for all convex subgraphs $H$ of $G$.
\end{lemma}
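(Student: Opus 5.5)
The plan is to derive Lemma~\ref{face'} directly from Lemma~\ref{face} by translating each of its ingredients into graph-theoretic language. We need a dictionary between lopsided-set notions and partial-cube notions, and we must check that it is faithful in both directions.

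\textbf{Setting up the dictionary.} Let $G$ be a partial cube with $\Theta$-classes indexed by a set $E$. Fix an isometric embedding into $H(E)$, so that $V(G)$ is identified with a set $\mathcal{L}\subseteq\mathrm{Sign}(E)$ and $G=G(\mathcal{L})$. This is a faithful identification: since $G$ is isometric in $H(E)$, two vertices are adjacent in $G$ exactly when their sign vectors differ in one coordinate. Consequently $G$ is the tope graph of a lopsided set if and only if this $\mathcal{L}$ is lopsided.

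\textbf{Convex subgraphs versus faces.} For a face $\mathcal{F}$, fixed by prescribing signs on some $A\subseteq E$, the set $\mathcal{L}\cap\mathcal{F}$ is the intersection of the halfspaces $W$ selected by those signs. If this intersection is nonempty, it is a restriction of $G$, and by Proposition~\ref{pro:ConvexSubgraph} it induces a convex subgraph. Conversely, again by Proposition~\ref{pro:ConvexSubgraph}, every convex subgraph $H$ is a restriction, i.e.\ an intersection of halfspaces. Such an intersection is exactly $\mathcal{L}\cap\mathcal{F}$ for the face $\mathcal{F}$ prescribing the corresponding signs. So nonempty sets $\mathcal{L}\cap\mathcal{F}$ correspond precisely to vertex sets of convex subgraphs, including $G$ itself via the trivial face.

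\textbf{Counting cubes.} I must check that $f'_i(\mathcal{L}\cap\mathcal{F})=c_i(G(\mathcal{L}\cap\mathcal{F}))$, reading $f'_i$ as the number of pairs $(Y,t)$ with $|Y|=i$ and $t\in(\mathcal{L}\cap\mathcal{F})^Y$, as the proof of Theorem~\ref{L=D} effectively does. Each such pair gives an $i$-dimensional face of $H(E)$ lying entirely inside $\mathcal{L}\cap\mathcal{F}$; this face induces a $Q_i$ in $G$, because $G$ is an induced subgraph of $H(E)$. Conversely, an induced $Q_i$ in a partial cube uses $i$ distinct $\Theta$-classes, with its vertices agreeing on all other coordinates, so it is such a face. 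Hence $\sum_i(-1)^if'_i=C_H(-1)$, and Lemma~\ref{face} becomes exactly the statement of Lemma~\ref{face'}.

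\textbf{Main obstacle.} The delicate point is the claim that every induced $Q_i$ in a partial cube is a full face of the ambient hypercube. I would prove this by showing that in an induced $Q_i$ the $i$ edge directions of the cube lie in distinct $\Theta$-classes. Two parallel edges of a square are $\Theta$-related, while the two directions at a vertex differ because $G$ is bipartite and geodesics in $Q_i$ are isometric. Since an induced $Q_i$ is isometric in $G$, its $2^i$ vertices then realize all sign patterns on those $i$ coordinates. A secondary point is that the notation $\#\bigcup$ in \eqref{eq:fi} should be read as counting pairs (faces) rather than vertices; I would state this interpretation explicitly.
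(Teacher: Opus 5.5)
Your proposal is correct and follows the paper's own route: you obtain Lemma~\ref{face'} by translating Lemma~\ref{face}, matching faces $\mathcal{F}$ with restrictions (hence with convex subgraphs, by Proposition~\ref{pro:ConvexSubgraph}) and identifying $f'_i(\mathcal{L}\cap\mathcal{F})$ with $c_i(G(\mathcal{L}\cap\mathcal{F}))$. You also make explicit what the paper leaves implicit: that every convex subgraph arises as some $G(\mathcal{L}\cap\mathcal{F})$, that an induced $Q_i$ in a partial cube is an $i$-face of $H(E)$, and that $\#\bigcup$ counts faces; the only step still tacit (in the paper as well) is that $G$ is a lopsided tope graph iff its canonical $\mathcal{L}$ is lopsided, which relies on lopsided sets embedding isometrically and on isometric hypercube embeddings of a partial cube being unique up to coordinate permutations and sign changes.
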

\begin{proof}[\textbf{Proof of Theorem \ref{pc-m-closed}}] 
For any $G \in \mathcal{G}$, since $\mathcal{G}$ is pc-minor closed, any convex subgraph $H$ of $G$ is also in $\mathcal{G}$. Then there is a daisy cube $D$ such that $C_H=C_D$. Thus, $C_H(-1)=C_D(-1)=1$. Therefore, $G$ is a tope graph of a lopsided set by Lemma~\ref{face'}, that is, $\mathcal{G}\subseteq \mathcal{G_{\mathrm{LOP}}}.$
\end{proof}

Before proving Theorem~\ref {C_H=-1}, we state a key lemma.
\begin{lemma}[\cite{KKTM}]\label{LOP-equ}
For a graph $G$, $G$ is the tope graph of a lopsided set if and only if $G$ is a partial cube and all its antipodal subgraphs are hypercubes.

\end{lemma}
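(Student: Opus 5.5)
The statement is Lemma~\ref{LOP-equ}, quoted from \cite{KKTM}. I would prove it by reducing both directions to the local Euler-characteristic criterion of Lemma~\ref{face'}, namely $C_H(-1)=1$ for every convex subgraph $H$, and arguing by induction on $\mathrm{idim}$.

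\emph{Forward direction.} Let $G=G(\mathcal{L})$ with $\mathcal{L}$ lopsided; then $G$ is a partial cube. Let $H$ be an antipodal subgraph and $n=\mathrm{idim}(H)$.

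1. Since $H$ is convex, Proposition~\ref{pro:ConvexSubgraph} makes it a restriction, so $H=G(\mathcal{L}\cap\mathcal{F})$ for a face $\mathcal{F}$. Projecting onto the $n$ coordinates $E'$ that vary on $H$ and using Lemma~\ref{inher+com}(ii), $H$ is the tope graph of a lopsided set $\mathcal{L}'\subseteq\mathrm{Sign}(E')$. By antipodality $-\mathcal{L}'=\mathcal{L}'$, and every coordinate of $E'$ is non-constant.
2. I induct on $n$. Contract a $\Theta$-class $e$ of $H$. The result is the tope graph of a projection, hence lopsided by Lemma~\ref{inher+com}(ii). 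It is again antipodal, since $\mathrm{conv}(v,-_Hv)=H$ survives contraction. By induction it is $Q_{n-1}$.
3. Hence $H$ is $Q_{n-1}$ with a set $D$ of doubled vertices, where $D$ corresponds to $\mathcal{L}'^{\{e\}}$. This gives
\[
C_H(x)=C_{W^+}(x)+C_{W^-}(x)+x\,C_{D}(x),
\]
where $W^\pm$ are the two halfspaces, which are convex.
4. Applying Lemma~\ref{face'} to $H$, $W^+$ and $W^-$ yields $1=1+1-C_D(-1)$, so $C_D(-1)=1$.
5. Antipodality swaps $W^+$ and $W^-$, and $W^+\cup W^-$ projects onto all of $Q_{n-1}$. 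From this I want to conclude $D$ is everything, so $H=Q_n$.
6. For step 5, the cleanest route is Lawrence's dichotomy for $\mathcal{L}'$. Either $E'\in\underline{\mathcal{X}}(\mathcal{L}')$, in which case $H$ is the full cube, or $\emptyset\in\underline{\mathcal{X}}(\mathrm{Sign}(E')-\mathcal{L}')$. In the second case I would apply the dichotomy to each $A=E'-\{e\}$ together with the antipodal symmetry to derive a contradiction.

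\emph{Backward direction.} Suppose every antipodal subgraph of the partial cube $G$ is a hypercube. The property passes to convex subgraphs, because an antipodal subgraph of a convex subgraph is antipodal in $G$. So by Lemma~\ref{face'} it suffices to show $C_G(-1)=1$ for every such $G$, by induction on $\mathrm{idim}(G)$.

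1. Choose a $\Theta$-class $F$ with halfspaces $W^+,W^-$. Split the cubes of $G$ into those in $W^+$, those in $W^-$, and those crossing $F$. The crossing cubes correspond bijectively to cubes of the subgraph $\partial$ induced by one side of $F$.
2. This gives $C_G(-1)=C_{W^+}(-1)+C_{W^-}(-1)-C_{\partial}(-1)$. The halfspaces are convex with smaller $\mathrm{idim}$, so each equals $1$ by induction.
3. It remains to show $C_\partial(-1)=1$. I would do this by showing that $\partial$, viewed inside the contraction $G/F$, is a convex subgraph of a partial cube in the same class.

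\emph{Main obstacle.} The hard step is step 3: the class must be shown to be closed under contraction, and $\partial$ must be shown to be convex there. This is where the hypercube condition on antipodal subgraphs is essential. A contraction could create a non-cube antipodal subgraph, such as an even cycle $C_{2k}$ with $k\geq 3$, and the only way to exclude it is to lift it back to an antipodal non-cube subgraph of $G$. If this direct argument resists, I would instead use the forbidden pc-minor characterization of \cite{KKTM}.
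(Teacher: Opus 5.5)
Your forward direction is sound, and step~6 alone closes it once step~2 is in place. Take $A=E'-\{e\}$. The alternative $\{e\}\in\underline{\mathcal{X}}(\mathrm{Sign}(E')-\mathcal{L}')$ says some $t\in\mathrm{Sign}(E'-\{e\})$ has no extension in $\mathcal{L}'$, which contradicts step~2 (the contraction is $Q_{n-1}$). The other alternative puts a whole halfcube $\{x\in\mathrm{Sign}(E') : x(e)=\sigma\}$ inside $\mathcal{L}'$, and $-\mathcal{L}'=\mathcal{L}'$ supplies the other half. So steps 3--5 are unnecessary. In step~1, take the lopsidedness of $\mathcal{L}\cap\mathcal{F}$ from Lemma~\ref{face} or Lemma~\ref{face'}, not from Lemma~\ref{inher+com}(ii).

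The backward direction has a genuine gap at the step you flag, and the repair you propose is false. Everything reduces to $C_\partial(-1)=1$, which you want to get from $\partial$ being convex in $G/F$. Take $G=Q_3$ minus one vertex: it is a daisy cube, and all its antipodal subgraphs are cubes. For every $\Theta$-class $F$ we have $G/F\cong Q_2$, and $\partial$ is a path on three vertices, which is not convex in $Q_2$. Here $C_\partial(-1)=3-2=1$ still holds, but not for your reason.

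What your induction really needs is that $\partial=G[U^+]$ is itself a partial cube all of whose antipodal subgraphs are hypercubes. Since $\mathrm{idim}(\partial)<\mathrm{idim}(G)$, the induction hypothesis would then apply directly. In the lopsided setting this is Lemma~\ref{inher}, because $\partial$ corresponds to $\mathcal{L}^{\{e\}}$. In the backward direction, however, you do not yet know that $V(G)$ is lopsided. For general partial cubes $U^+$ need not even be connected: in $C_6$ it is two nonadjacent vertices, giving $C_\partial(-1)=2$ and $C_{C_6}(-1)=0$.

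The real content of the lemma is showing that the antipodal-hypercube hypothesis forces $U^+$ to be isometric and to inherit the hypothesis. Alternatively, you could prove the class is closed under contraction, but you would still need a correct way to place $\partial$ inside it. Neither Lemma~\ref{face'} nor your cube decomposition supplies this. Your fallback of invoking the forbidden pc-minor characterization of \cite{KKTM} just cites the same theorem. The paper does the same: it quotes Lemma~\ref{LOP-equ} from \cite{KKTM} without proof. As it stands, your proposal does not give a self-contained proof of the ``if'' direction.
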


\begin{proof}[\textbf{Proof of Theorem \ref{C_H=-1}}]
We establish the equivalence by verifying the necessary implications below.

(i)$\implies$(ii): Since $G$ is a tope graph of a lopsided set, by Lemma~\ref{LOP-equ}, for any antipodal subgraph $H$, $H$ is a hypercube. Since hypercubes are daisy cubes, (ii) holds upon setting $H_1=H$.

\smallskip

(ii)$\implies$(iii): For any antipodal subgraph $H$ of $G$, there exists a daisy cube $H_1$ with
$$
C_H(x)=C_{H_1}(x).
$$
It follows from Lemma~\ref{C_G=(-1)} that
$$
C_H(-1)=C_{H_1}(-1)=1.
$$

\smallskip

(iii)$\implies$(i): 
 By Lemma~\ref{LOP-equ}, we only need to prove that $H$ is a hypercube. Assume that $H$ is not a hypercube with $\mathrm{idim}(H)=l$. Let $Q_k$ be a $k$-cube which is an induced subgraph of $H$. Since $H$ is not a hypercube, $k<l$. Denote the vertex set of $Q_k$ by $V(Q_k)$. Since $H$ is antipodal, the antipodal set of $V(Q_k)$ is also in $H$, denoted by $\overline{V(Q_k)}$, i.e., $\overline{V(Q_k)}=\{-_Hv|v\in V(Q_k)\}$. Now, we show that $V(Q_k)\neq\overline{V(Q_k)}$. Since $\mathrm{idim}(H)=l$, Observation \ref{obs:d=id} gives $d_H(v,-_Hv)=l$. On the other hand, $\mathrm{diam}(Q_k)=k$. Hence, $-_Hv\not\in V(Q_k)$.  
 That is, $V(Q_k)\neq\overline{V(Q_k)}$. Since the two subgraphs induced by $V(Q_k)$ and $\overline{V(Q_k)}$ are isomorphic (both are $ k$-cubes), the same-dimensional cubes come in pairs. Thus, $$C_{H}(-1)=\sum_{k\geq0}(-1)^kc_k$$ is even, which yields a contradiction with $C_{H}(-1)=1.$
\smallskip

\end{proof}

\section{Conclusion and some remarks}
In this paper, we have characterized the class of pc-minor closed partial cubes for which there exists a daisy cube having the same cube polynomial, namely the class of tope graphs of lopsided sets. We also construct an example $Q_4^{--} \vee C_6$ for which there exists a daisy cube with the same cube polynomial, but it is not a tope graph of a lopsided set. This naturally raises the following problem.
\begin{problem}
   Characterize all partial cubes whose cube polynomials coincide with those of some daisy cube.
\end{problem}

\vskip 0.2 cm
\noindent{\bf Acknowledgements} This work was partially supported by the National Natural Science Foundation of China (No. 12671410).

\vskip 0.2 cm
\noindent{\bf Data Availability:} No datasets were generated or analysed during the current study.

\section*{Declarations}
\noindent{\bf Competing interests} The authors have no competing interests to declare that are relevant to the content of this article.

\end{document}